
\documentclass[12pt]{amsart}

\headheight=7pt       \topmargin=14pt
\textheight=580pt	\textwidth=460pt
\oddsidemargin=5pt	\evensidemargin=5pt

\usepackage{amsmath,amsthm,amsfonts,amscd,flafter,epsf}

\usepackage{graphicx} 

\usepackage{amssymb} 

\usepackage{verbatim}  

\usepackage{colonequals}  

\usepackage[all]{xy}

\usepackage{pdfsync}

\usepackage{hyperref}
 
\usepackage{color}
 
\title[{Hyperbolic Dimension and Decomposition Complexity}]
{Hyperbolic Dimension and Decomposition Complexity} 

 \author[Nicas]{Andrew Nicas$^*$}
\address{Department of Mathematics and Statistics, McMaster University, Hamilton, Ontario, Canada L8S 4K1}
\email{nicas@mcmaster.ca}
\thanks{$^*$Partially supported by a grant from
the Natural Sciences and Engineering Research Council of Canada}
 
\author[Rosenthal]{David Rosenthal$^\dagger$}
\address{Department of Mathematics and Computer Science, St.\ John's University, 8000 Utopia
Pkwy, Queens, NY 11439, USA}
\thanks{$^\dagger$Partially supported by a grant from the Simons Foundation, \#229577.}
\email{rosenthd@stjohns.edu}

\subjclass[2010]{Primary 20F69; Secondary 20F65}
\keywords{Asymptotic dimension, hyperbolic dimension, finite decomposition complexity}


\numberwithin{equation}{section}
    
\begin{document}

\begin{abstract} 
The aim of this paper is to provide some new tools to aid the study of {\it decomposition complexity}, a notion introduced by Guentner, Tessera and Yu. In this paper, three equivalent definitions for decomposition complexity are established.
We prove that metric spaces with finite 
hyperbolic dimension have finite (weak) decomposition complexity, and we prove that the collection of metric families that are coarsely embeddable into Hilbert space is closed under decomposition.
A method for showing that certain metric spaces do not have finite decomposition complexity is also discussed.
\end{abstract}

\dedicatory{Dedicated to Ross Geoghegan on the occasion of his $70^{th}$ birthday}

\maketitle

\baselineskip 18pt


\newtheorem{theorem}{Theorem}[section]
\newtheorem*{thm}{Theorem}
\newtheorem{lemma}[theorem]{Lemma}
\newtheorem{proposition}[theorem]{Proposition}
\newtheorem{corollary}[theorem]{Corollary}
\newtheorem{claim}[theorem]{Claim}
\newtheorem{question}[theorem]{Question}

\theoremstyle{definition}
\newtheorem{definition}[theorem]{Definition}

\theoremstyle{definition}
\newtheorem{remark}[theorem]{Remark}

\theoremstyle{remark}
\newtheorem{notation}[theorem]{Notation}

\theoremstyle{definition}
\newtheorem{example}[theorem]{Example}

\newtheorem*{conditionA}{Condition (A)}
\newtheorem*{conditionB}{Condition (B)}
\newtheorem*{conditionC}{Condition (C)}

\newcommand{\ra}{{\rightarrow}}
\newcommand{\bs}{{\backslash}}
\newcommand{\lra}{{\longrightarrow}}


\newcommand{\asdim}{\operatorname{asdim}}
\newcommand{\hyperdim}{\operatorname{hyperdim}}
\newcommand{\whyperdim}{\operatorname{w-hyperdim}}
\newcommand{\id}{\operatorname{id}}
\newcommand{\supp}{\operatorname{supp}}
\newcommand{\st}{\operatorname{star}}
\newcommand{\ev}{\operatorname{ev}}
\newcommand{\mesh}{\operatorname{mesh}}
\newcommand{\diam}{\operatorname{diam}}

\newcommand{\tb}{{\large \textbullet}}

\newcommand{\Zhalf}{{\mathbb Z}[1/2]}


\newcommand{\cB}{\mathcal{B}}
\newcommand{\cC}{\mathcal{C}}
\newcommand{\cD}{\mathcal{D}}
\newcommand{\cE}{{\mathcal E}}
\newcommand{\cF}{{\mathcal F}}
\newcommand{\cG}{{\mathcal G}}
\newcommand{\cH}{{\mathcal H}}
\newcommand{\cJ}{{\mathcal J}}
\newcommand{\cL}{{\mathcal L}}
\newcommand{\calo}{\mathcal{O}}
\newcommand{\cP}{{\mathcal P}}
\newcommand{\cT}{{\mathcal T}}
\newcommand{\cU}{{\mathcal U}}
\newcommand{\cV}{{\mathcal V}}
\newcommand{\cW}{{\mathcal W}}
\newcommand{\cX}{{\mathcal X}}
\newcommand{\cY}{{\mathcal Y}}
\newcommand{\cZ}{{\mathcal Z}}

\newcommand{\fA}{\mathfrak{A}}
\newcommand{\fB}{\mathfrak{B}}
\newcommand{\fC}{\mathfrak{C}}
\newcommand{\fD}{\mathfrak{D}}
\newcommand{\wD}{w\mathfrak{D}}
\newcommand{\fE}{\mathfrak{E}}
\newcommand{\fH}{\mathfrak{H}}
\newcommand{\fL}{\mathfrak{L}}
\newcommand{\N}{\mathfrak{N}}
\newcommand{\fU}{\mathfrak{U}}


\newcommand{\tS}{{\widetilde S}}


\newcommand{\B}{{\rm B}}
\newcommand{\E}{{\rm E}}

\newcommand{\K}{{\mathbf K}}
\newcommand{\HH}{{\mathbf{HH}}}

\newcommand{\bbH}{{\mathbb H}}
\newcommand{\bN}{{\mathbb N}}
\newcommand{\R}{{\mathbb R}}
\newcommand{\Z}{{\mathbb Z}}




\section{Introduction}

The {\it asymptotic dimension} of a metric space was introduced by Gromov  \cite{Gromov}
as a tool for studying the large scale geometry of groups.
Interest in this concept intensified when Guoliang Yu proved the Novikov Conjecture
for a finitely generated group $G$ having finite asymptotic
dimension as a metric space with a word-length metric and whose classifying space $BG$ has the homotopy type
of a finite complex,~\cite{Yu}. There are many geometrically interesting metric spaces that do not have finite asymptotic dimension.
In order to study groups with infinite asymptotic dimension, 
Guentner, Tessera and Yu 
introduced the notion of {\it finite decomposition complexity}, abbreviated here to {\it FDC},~\cite{Guentner_Tessera_Yu1}.
Every countable group admits a proper left-invariant metric that is unique up to coarse equivalence.
Guentner, Tessera and Yu  showed
that any countable subgroup of $GL(n,R)$, the group of invertible $n \times n$ matrices over an arbitrary commutative ring $R$,
has FDC,~\cite{Guentner_Tessera_Yu2}.
Such a group can have infinite asymptotic dimension; for example,
the wreath product $\Z \wr \Z$ (this finitely generated group can be realized as a subgroup of $GL(2, \Z[t, t^{-1}])$).
The collection of countable groups with FDC contains groups with finite asymptotic dimension and has nice inheritance properties:
It is closed under subgroups, extensions, free products with amalgamation, HNN extensions and countable direct unions.
The FDC condition was introduced to study topological rigidity questions,~\cite{Guentner_Tessera_Yu1}. In this paper we focus on finite decomposition complexity as a coarse geometric invariant.

The definition of finite decomposition complexity is somewhat tricky to work with, so it is advantageous to develop tools for determining whether or not a metric space has FDC. In this paper we introduce some new tools for working with finite decomposition complexity (both the {\it strong} and {\it weak} forms), and try to give a feel for decomposition complexity by using these tools in several situations.
Motivated by the equivalent definitions for finite asymptotic dimension,
we provide analogous conditions that are equivalent to decomposition complexity, which we then use in the following two applications. 

Buyalo and Schroeder introduced the {\it hyperbolic dimension} of a metric space (Definition~\ref{def:hyperbolicdim}) to study quasi-isometric embedding properties of negatively curved spaces. 
Cappadocia introduced the related notion of the {\it weak hyperbolic dimension} of a metric space (Definition~\ref{def:weaklydoubling}).
Hyperbolic dimension is an upper bound for weak hyperbolic dimension.
We show in Corollary~\ref{cor:whyperdimdecompose} that a metric space with  weak hyperbolic dimension at most $n$ 
is $n$-{\it decomposable} (Definition~\ref{def:n-decompose}) over the collection of metric families with finite asymptotic dimension.
This implies that such a metric space has weak FDC; if $n\leq 1$, then it has FDC. 

In~\cite{Dadarlat_Guentner1}, Dadarlat and Guentner introduced the notion of a family of metric spaces that is {\it coarsely embeddable into Hilbert space}\footnote{Dadarlat and Guentner used the phrase ``equi-uniformly embeddable" instead of ``coarsely embeddable".} (Definition~\ref{def:coarse-embedding-Hilbert}). In Theorem~\ref{thm:decomp-coarse-embedding}, we show that if a metric family is $n$-decomposable over the collection of metric families that are coarsely embeddable into Hilbert space, then that metric family is also coarsely embeddable into Hilbert space. In other words, the collection of metric families that are coarsely embeddable into Hilbert space is stable under decomposition. This recovers the known fact that a metric space with (strong or weak) FDC is coarsely embeddable into Hilbert space.

Not all metric spaces satisfy the FDC condition. Clearly, any metric space that does not coarsely embed into Hilbert space will not have (strong or weak) FDC. Generalizing an example of Wu and Chen~\cite{WuChen}, we provide a tool that can be used to show that certain metric spaces do not have (strong or weak) FDC. In Theorem~\ref{thm:notweak FDC} it is shown that if a metric space $X$ admits a surjective {\it uniform expansion} and has weak finite decomposition complexity, then $X$ has finite asymptotic dimension. Thus, as explained in Example~\ref{exam:normed-linear}, any infinite-dimensional normed linear space cannot have (strong or weak) FDC because such a space has infinite asymptotic dimension and admits a surjective uniform expansion.

In the final section of this paper we recall some interesting open problems about decomposition complexity and suggest a few new ones.


\section{Decomposition Complexity}
\label{sec:DC}

Guentner, Tessera and Yu's concept of finite decomposition complexity was motivated by the following definition of finite asymptotic dimension.

\begin{definition}\label{def:asdim}
	Let $n$ be a non-negative integer. The metric space $(X,d)$ has {\it asymptotic dimension at most $n$}, $\asdim X \leq n$, if for every $r>0$ there exists a cover $\cU$ of $X$ such that 
	\begin{enumerate}
		\item[(i)] $\cU=\cU_0 \cup \cU_1 \cup \cdots \cup \cU_n$;
		\item[(ii)] each $\cU_i$, $0\leq i \leq n$, is {\it $r$-disjoint}, i.e., $d(U,V) > r$ for every $U\neq V$ in $\cU_i$; and
		\item[(iii)] $\cU$ is {\it uniformly bounded}, i.e., the {\it mesh} of $\cU$, $\mesh(\cU)=\sup\{\diam(U)~|~U \in \cU\}$, is finite.
	\end{enumerate}
	If no such $n$ exists, then $\asdim X = \infty$.
\end{definition}

In the above definition, note that the cover $\cU$ has {\it multiplicity} at most $n+1$, i.e., every point of $X$ is contained in at most $n+1$ elements of $\cU$. Also note that while $\cU$ is not required to be an open cover, if $\asdim X < \infty$, then one can always choose $\cU$ to be an open cover because of condition~(ii). 

The asymptotic dimension of a finitely generated group, $G$, is defined to be the asymptotic dimension of $G$ considered as a metric space with the word-length metric associated to any finite set of generators. This is well-defined since asymptotic dimension is a coarse invariant and any two finite generating sets for $G$ yield coarsely equivalent metric spaces. More generally, every countable group $G$ admits a proper left-invariant metric that is unique up to coarse equivalence. Thus, asymptotic dimension can also be used as a coarse invariant for countable groups. 

In order to generalize the definition of finite asymptotic dimension, it is useful to work with the notion of a {\it metric family}, a (countable) collection of metric spaces.
A single metric space is viewed as a metric family with one element.
A {\it subspace} of a metric family $\cX$ is a metric family $\cZ$ such that every element of $\cZ$ is a metric subspace of some element of $\cX$. For example, a cover $\cU$ of a metric space $X$ is a metric family, where each element of $\cU$ is given the subspace metric inherited from $X$, and $\cU$ is a subspace of the metric family~$\{ X \}$.

\begin{definition}\label{def:r-decompose}
	Let $r>0$ and $n$ be a non-negative integer. The metric family $\cX$ is {\em $(r,n)$-decomposable} over the metric family $\cY$, denoted $\cX \xrightarrow{(r,n)} \cY$, if for every $X$ in $\cX$, $X=X_0 \cup X_1 \cup \cdots \cup X_n$ such that for each $i$ 
	$$X_i=\bigsqcup_{r\text{-disjoint}}X_{ij}$$ 
where each $X_{ij}$ is in $\cY$. 
\end{definition}

\begin{definition}\label{def:n-decompose}
	Let $n$ be a non-negative integer, and let $\fC$ be a collection of metric families. The metric family $\cX$ is {\em $n$-decomposable} over $\fC$ if for every $r>0$ $\cX$ is $(r,n)$-decomposable over some metric family $\cY$ in $\fC$. 
\end{definition}

Following~\cite{Guentner_Tessera_Yu1}, we say that $\cX$ is {\em weakly decomposable} over $\fC$ if $\cX$ is $n$-decomposable over $\fC$ for some non-negative integer $n$, and $\cX$ is {\em strongly decomposable} over $\fC$ if $\cX$ is $1$-decomposable over $\fC$.

\begin{definition}
	A metric family $\cZ$ is {\em bounded} if the diameters of the elements of $\cZ$ are uniformly bounded, i.e., if $\sup\{\diam(Z)~|~Z \in \cZ\}<\infty$. The collection of all bounded metric families is denoted by $\fB$.
\end{definition}

\begin{example}\label{ex-asdim}
	Let $X$ be a metric space. The statement that the metric family $\{ X \}$ is $n$-decomposable over $\fB$ is equivalent to the statement that $\asdim(X) \leq n$.
\end{example}

The following definition is equivalent to Bell and Dranishnikov's definition of a collection of metric spaces having finite asymptotic dimension ``uniformly" (\cite[Section 1]{Bell_Dranish1}).

\begin{definition}\label{def:family-asdim}
	Let $n$ be a non-negative integer. The metric family $\cX$ has {\em asymptotic dimension at most $n$}, denoted $\asdim(\cX)\leq n$, if $\cX$ is $n$-decomposable over $\fB$.
\end{definition}

\begin{example}\label{exam:balls}
For each positive integer $n$,  let $\cX_n$ be the metric family of subsets of $\R^n$, with the Euclidean metric, consisting of open balls centered at the origin with positive integer radius.
Then $\asdim(\cX_n) =n$.
\end{example}

\begin{proof}
Since $\cX_n$ is a family of subspaces of $\R^n$, we have that  $\asdim(\cX_n) \leq \asdim(\R^n) =n$.

Suppose that $\asdim(\cX_n) = \ell < n$.
Then, for each integer $m \geq 1$, there exists a cover $\cU_m$, which can be assumed to be an open cover of the open ball $B_m(0)$, with multiplicity at most $\ell +1$, such that $\sup\{\mesh(\cU_m) ~|~ m \geq 1\} = D < \infty$.

Let $\epsilon > 0$. Choose an integer $k$ so that $k > D/\epsilon$.
For $\lambda > 0$ and $A \subset \R^n$ let $\lambda \, A = \{ \lambda a ~|~ a \in A \}$.
Then $\cU = \{ \tfrac{1}{k} \, U ~|~ U \in \cU_k \}$  is an open cover of $B_1(0)$ 
with $\mesh(\cU) < \epsilon$ and multiplicity at most $\ell +1$.
Hence the covering dimension of $B_1(0)$ is at most $\ell$, which
contradicts the fact that the covering dimension of $B_1(0)$ is $n$.
\end{proof}

\begin{definition}\label{def:FDC}
	Let $\fD$ be the smallest collection of metric families containing $\fB$ that is closed under strong decomposition, and let $\wD$ be the smallest collection of metric families containing $\fB$ that is closed under weak decomposition. A metric family in $\fD$ is said to have {\em finite decomposition complexity} (abbreviated to ``FDC"), and a metric family in $\wD$ is said to have {\em weak finite decomposition complexity} (abbreviated to ``weak FDC").
\end{definition}

Clearly, finite decomposition complexity implies weak finite decomposition complexity.
The converse is unknown. 

\begin{question}\cite[Question 2.2.6]{Guentner_Tessera_Yu2}
\label{doesweakimplystrong}
Does weak finite decomposition complexity imply finite decomposition complexity?
\end{question}

The base case of this question has an affirmative answer, namely, if $\asdim X < \infty$ then $X$ has FDC,~\cite[Theorem 4.1]{Guentner_Tessera_Yu2} (although even this case is difficult). Therefore, if $\fA$ is the collection of all metric families with finite asymptotic dimension, then we have the following sequence of inclusions of collections of metric families:
\begin{equation}\label{eq:asdim_has_FDC}
	\fA \subset \fD \subset \wD.
\end{equation}

	The collection of countable groups (considered as metric spaces with a proper left-invariant metric) in $\fD$ is quite large. It contains countable subgroups of $GL(n,R)$, where $R$ is any commutative ring, countable subgroups of almost connected Lie groups, hyperbolic groups and elementary amenable groups. It is also closed under subgroups, extensions, free products with amalgamation, HNN extensions and countable direct unions,~\cite{Guentner_Tessera_Yu2}. 
	
The FDC and weak FDC conditions have important topological consequences.
For example,
a finitely generated group with weak FDC satisfies the {\it Novikov Conjecture},
and a metric space with (strong) FDC and bounded geometry satisfies the {\it Bounded Borel Conjecture},~\cite{Guentner_Tessera_Yu1,Guentner_Tessera_Yu2}.
These results were obtained by studying certain {\it assembly maps} in $L$-theory and topological $K$-theory.
The assembly map in algebraic $K$-theory has been studied for groups with FDC by several authors, including Ramras, Tessera and Yu~\cite{Ramras-Tessera-Yu},
Kasprowski~\cite{Kasprowski}, and
Goldfarb~\cite{Goldfarb}.

\smallskip
	There is an equivalent description of FDC, and weak FDC, in terms of a {\it metric decomposition game},~\cite[Theorem 2.2.3]{Guentner_Tessera_Yu2}, that is useful for understanding the proofs of many of the inheritance properties mentioned above. 
The metric decomposition game has two players, a defender and a challenger. 
The game begins with a metric family $\cX = \cY_0$. 
On the first turn, the challenger declares a positive integer $r_1$ and the defender
must produce a $(r_1, n_1)$-decomposition of $\cY_0$ over a new metric family $\cY_1$.
On the second turn, the challenger declares a positive integer $r_2$  and the defender
must produce an $(r_2, n_2)$-decomposition of $\cY_1$ over a new metric family $\cY_2$.
The game continues in this manner, ending if and when the
defender produces a bounded family. In this case the defender has won.
A winning strategy is a set of instructions that, if followed by the defender,
will guarantee a win for any possible requests made by the challenger.
The family  $\cX$ has weak FDC if a  winning strategy exists and strong FDC if, additionally, the strategy
always allows for $n_j =1$ in the defender's response.

\smallskip		
	Next, we recall some terminology introduced in~\cite{Guentner_Tessera_Yu2} that generalizes basic notions from the coarse geometry of metric spaces to metric families. 
	
	Let $\cX$ and $\cY$ be metric families. A {\it map of families},  $F:\cX \to \cY$, is a collection of functions $F=\{f:X \to Y\}$, where $X \in \cX$ and $Y \in \cY$, such that every $X \in \cX$ is the domain of at least one $f$ in $F$. The {\it inverse image of $\cZ$ under $F$} is the subspace of $\cX$ given by $F^{-1}(\cZ)=\{ f^{-1}(Z) \; | \; Z\in \cZ, f\in F \}$.

\begin{definition}\label{def:coarse-embedding}
	A map of metric families, $F:\cX \to \cY$, is a {\em coarse embedding} if there exist non-decreasing functions $\delta, \rho: [0,\infty) \to [0,\infty)$, with $\lim_{t\to \infty}\delta(t)=\infty=\lim_{t\to \infty}\rho(t)$, such that for every $f:X \to Y$ in $F$ and every $x,y\in X$,
	\[ \delta\big(d_X(x,y)\big) \leq d_Y\big(f(x),f(y)\big) \leq \rho\big(d_X(x,y)\big). \]
\end{definition}

One can think of a coarse embedding of metric families as a collection of ``uniform" coarse embeddings, in the sense that they have a common $\delta$ and $\rho$. The easiest example of a coarse embedding of metric families is the inclusion of a subspace $\cZ$ of $\cY$ into $\cY$.

\begin{definition}\label{def:coarse-equiv}
	A map of metric families, $F:\cX \to \cY$, is a {\em coarse equivalence} if for each $f:X \to Y$ in $F$ there is a map $g_f:Y\to X$ such that: 
	\begin{enumerate}
		\item[(i)] the collection $G=\{g_f\}$ is a coarse embedding from $\cY$ to $\cX$; and 
		\item[(ii)] the composites $f\circ g_f$ and $g_f \circ f$ are {\em uniformly close} to the identity maps $\id_Y$ and $\id_X$, respectively, in the sense that there is a constant $C>0$ with
	\[ d_Y\big(y,f\circ g_f(y)\big)\leq C \text{ and } d_X\big(x,g_f\circ f(y)\big)\leq C, \]
for every $f:X \to Y$ in $F$, $x\in X$, and $y\in Y$.
	\end{enumerate}
\end{definition}

\begin{definition}\label{def:closed-under-embeddings}
	A collection of metric families, $\fC$, is {\em closed under coarse embeddings} if every metric family $\cX$ that coarsely embeds into a metric family $\cY$ in $\fC$ is also a metric family in $\fC$.
\end{definition}

Guentner, Tessera and Yu proved that $\fD$ and $\wD$ are each closed under coarse embeddings~\cite[Coarse Invariance 3.1.3]{Guentner_Tessera_Yu2}. It is straightforward to check that the following collections of metric families are also closed under coarse embeddings.

\begin{example}\label{ex:ce} 
Collections of metric families that are closed under coarse embeddings:
	\begin{enumerate}
		\item $\fB$, the collection of bounded metric families.
		\item $\fA$, the collection of metric families with finite asymptotic dimension.
		\item $\fA_n$, the collection of metric families with asymptotic dimension at most $n$.
		\item $\fH$, the collection of metric families that are {\it coarsely embeddable into Hilbert space} (see Definition~\ref{def:coarse-embedding-Hilbert} below).
		
	\end{enumerate}
\end{example}

The following is similar to \cite[Coarse Invariance 3.1.3]{Guentner_Tessera_Yu2}.

\begin{theorem}\label{thm:cembed-decomp}
	Let $\cX$ and $\cY$ be metric families, and let $\fC$ be a collection of metric families that is closed under coarse embeddings. If $\cX$ coarsely embeds into $\cY$ and $\cY$ is $n$-decomposable over $\fC$, then $\cX$ is $n$-decomposable over $\fC$. In particular, if $\cX$ is coarsely equivalent to $\cY$, then $\cX$ is $n$-decomposable over $\fC$ if and only if $\cY$ is $n$-decomposable over~$\fC$.
\end{theorem}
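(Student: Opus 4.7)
The plan is to pull back an $n$-decomposition of $\cY$ to one of $\cX$ via the coarse embedding, using the upper control function $\rho$ to convert between disjointness parameters.

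Fix a coarse embedding $F=\{f:X\to Y\}\colon \cX \to \cY$ with control functions $\delta,\rho$ as in Definition~\ref{def:coarse-embedding}, and fix $r>0$. Set $s=\rho(r)$. By hypothesis there is a metric family $\cZ$ in $\fC$ with $\cY \xrightarrow{(s,n)} \cZ$, so for each $Y\in\cY$ we have a decomposition $Y=Y_0\cup\cdots\cup Y_n$ with $Y_i=\bigsqcup_{s\text{-disjoint}} Y_{ij}$ and each $Y_{ij}\in\cZ$. For each $X\in\cX$ choose one $f\colon X\to Y$ in $F$ and define $X_{ij}=f^{-1}(Y_{ij})$ and $X_i=\bigcup_j X_{ij}$. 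Clearly $X=X_0\cup\cdots\cup X_n$.

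The verification that each $X_i$ is an $r$-disjoint union of the $X_{ij}$ is the crux of the argument, and it rests on the following standard observation: if $x\in X_{ij}$ and $x'\in X_{ij'}$ with $j\neq j'$, then $d_Y(f(x),f(x'))>s=\rho(r)$, so $\rho(d_X(x,x'))\geq d_Y(f(x),f(x'))>\rho(r)$; since $\rho$ is non-decreasing this forces $d_X(x,x')>r$. Hence $\cX \xrightarrow{(r,n)} \cY'$, where $\cY'=F^{-1}(\cZ)=\{f^{-1}(Z)\mid Z\in\cZ,\, f\in F\}$.

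It remains to show $\cY'\in\fC$. The restrictions $f|_{f^{-1}(Z)}\colon f^{-1}(Z)\to Z$, for $Z\in\cZ$ and $f\in F$, constitute a map $\cY'\to\cZ$, and they inherit the control functions $\delta$ and $\rho$ from $F$, so this map is a coarse embedding. Since $\cZ\in\fC$ and $\fC$ is closed under coarse embeddings, $\cY'\in\fC$. This proves $\cX$ is $n$-decomposable over $\fC$, as $r>0$ was arbitrary. The ``in particular'' clause is immediate: a coarse equivalence is a coarse embedding in both directions, so the conclusion holds symmetrically.

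The argument is essentially routine once the parameter $s=\rho(r)$ is chosen; the only mild obstacle is the bookkeeping needed to recognize $F^{-1}(\cZ)$ as a metric family that still coarsely embeds into $\cZ$, which is handled by noting that the same pair $(\delta,\rho)$ works for all the restrictions.
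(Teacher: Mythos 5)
Your proposal is correct and follows essentially the same route as the paper: pull back a $(\rho(r),n)$-decomposition of $\cY$ along $F$ to get an $(r,n)$-decomposition of $\cX$ over $F^{-1}(\cZ)$, and use closure of $\fC$ under coarse embeddings to see that $F^{-1}(\cZ)\in\fC$. The only difference is that you spell out the disjointness and coarse-embedding verifications that the paper leaves as ``straightforward.''
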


\begin{proof}
	Let $F:\cX \to \cY$ be a coarse embedding, and let $r>0$ be given. We must find a metric family $\cX'$ in $\fC$ such that $\cX$ is $(r,n)$-decomposable over $\cX'$. Since $\cY$ is $n$-decomposable over $\fC$, there is a metric family $\cY'$ such that $\cY$ is $(\rho(r),n)$-decomposable over $\cX'$, where $\rho$ is as in Definition~\ref{def:coarse-embedding}. It is straightforward to show that $\cX$ is $(r,n)$-decomposable over $\cX'=F^{-1}(\cY')$. Note that $F$ restricts to a coarse embedding from $F^{-1}(\cY')$ to $\cY'$. Since $\fC$ is closed under coarse embeddings, we are done.
\end{proof}


The following observation about decomposition is useful.

\begin{remark}\label{rem:finite-asdim}
\label{rm:decomposition}
If $\cX$, $\cY$, and $\cZ$ are metric families and
$\cX \xrightarrow{(r,m)} \cY \xrightarrow{(s,n)} \cZ$, then
$\cX \xrightarrow{(t, p)} \cZ$  where $t=\min(r,s)$ and $p=(m+1)(n+1)-1$. In particular, this shows that if $\cX$ is $m$-decomposable over $\fA_n$, then $\cX$ has asymptotic dimension at most $(m+1)(n+1)-1$.
\end{remark}

Let $(X, d_X)$ be a metric space and $\lambda > 1$. 
A  {\it uniform expansion of $X$ with expansion factor $\lambda$}  is  a map $T \colon X \rightarrow X$ such that $d_X(T(x), T(y)) = \lambda \, d_X(x,y)$ for all $x, y \in X$.
The following proposition,
generalizing
\cite[Example 2.2]{WuChen},
can be used to show that certain spaces do {\it not} have weak finite decomposition complexity.
	
\begin{theorem}
\label{thm:notweak FDC}
 Let $(X,d_X)$ be a metric space that admits a surjective uniform expansion.
 If $X$ has weak finite decomposition complexity then $X$ has finite asymptotic dimension.
\end{theorem}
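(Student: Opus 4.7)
The plan is to exploit the self-similarity of $X$ provided by the surjective uniform expansion $T$, promoting a single decomposition at unit scale (obtained from weak FDC) to a family of decompositions at every scale by iterating $T$. First I would observe that $T$ is a bijection: the condition $\lambda > 1$ forces injectivity, while surjectivity is assumed. Consequently each iterate $T^m \colon X \to X$ is a bijection multiplying all distances by $\lambda^m$; in particular, it sends $1$-disjoint families to $\lambda^m$-disjoint families and scales diameters by $\lambda^m$.

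Next I would invoke the game characterization of weak FDC, with the defender playing the winning strategy against the trivial challenge $r_i = 1$ at every round. By the definition of a winning strategy, the game terminates in some finite number of rounds $k$, producing a chain
$$\{X\} = \cY_0 \xrightarrow{(1, n_1)} \cY_1 \xrightarrow{(1, n_2)} \cdots \xrightarrow{(1, n_k)} \cY_k \in \fB.$$
Iterating Remark~\ref{rem:finite-asdim} $k-1$ times collapses this chain into a single decomposition $\{X\} \xrightarrow{(1, N)} \cY_k$ with $N + 1 = \prod_{i=1}^k (n_i + 1)$. Unpacking this, $X = X_0 \cup \cdots \cup X_N$ where each $X_i$ is a $1$-disjoint union of pieces $X_{ij}$ of diameter at most some fixed $D$ (coming from $\cY_k \in \fB$).

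Finally, given any $r > 0$, I would choose an integer $m$ with $\lambda^m \geq r$ and push the decomposition forward under $T^m$. Since $T^m$ is surjective, $X = T^m(X_0) \cup \cdots \cup T^m(X_N)$, and each $T^m(X_i)$ is a $\lambda^m$-disjoint (hence $r$-disjoint) union of sets $T^m(X_{ij})$ with $\diam(T^m(X_{ij})) \leq \lambda^m D$. This exhibits an $(r, N)$-decomposition of $X$ into a uniformly bounded family, so $\asdim X \leq N < \infty$. The only step warranting care is confirming that the game depth $k$ (and hence $N$) is finite for the specific challenge sequence $r_i = 1$; this is immediate from the definition of a winning strategy, so I anticipate no substantial obstacle.
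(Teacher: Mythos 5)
Your proposal is correct and follows essentially the same route as the paper: obtain a finite decomposition chain ending in a bounded family (the paper via the weak-FDC analogue of a theorem of Guentner--Tessera--Yu, you via the decomposition game with challenges $r_i=1$), collapse it to a single $(r,N)$-decomposition using Remark~\ref{rem:finite-asdim}, and then push it forward by iterates of the surjective expansion $T$ to achieve arbitrarily large disjointness scales. The bijectivity observation is harmless but unnecessary---only surjectivity of $T^m$ is used---and the rest matches the paper's argument.
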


\begin{proof}
Assume that the metric space $(X,d_X)$ has weak FDC and that $T \colon X \rightarrow X$ is a surjective uniform expansion of $X$ with expansion factor $\lambda > 1$.
By the analog of \cite[Theorem 2.4]{Guentner_Tessera_Yu1} for weak FDC (while \cite[Theorem 2.4]{Guentner_Tessera_Yu1} is stated for FDC, the proof there readily adapts to weak FDC),
there exists a finite sequence $(r_i, \, n_i)$, $i=1,\ldots,m$,  where each $r_i >0$ and the $n_i$'s are positive integers, together with metric families
$\cY_i$, $i=1,\ldots,m$, such that
\[
X \xrightarrow{(r_1,n_1)} \cY_1 \xrightarrow{(r_2,n_2)} \cY_2  \xrightarrow{\phantom{(r_2,n_2)}} \cdots \xrightarrow{(r_m,n_m)} \cY_m
\]
and $\cY_m \in \fB$.
(This is one winning round of the ``decomposition game.'')
By Remark \ref{rm:decomposition}, we have $X \xrightarrow{(r,n)} \cY_m$,
where $r=\min\{r_1, \ldots, r_m\}$ and $n =(n_1 +1)(n_2 +1) \cdots (n_m+1) \, -1$.
For any positive integer $k$, let $T^k (\cY_m) = \big\{ T^k(Y) ~|~ Y \in \cY_m \big\}$.
Notice that $T^k (\cY_m) \in \fB$.
Since $T$ is surjective, $T^k(X) = X$. 
Hence, $$\{X\} = \big\{T^k(X)\big\}  \xrightarrow{(\lambda^k r, \, n)} T^k (\cY_m).$$
Since $\lambda > 1$, we have $\lambda^k r \rightarrow \infty$ as $k \rightarrow \infty$. 
It follows that $\{X\}$ is $n$-decomposable over $\fB$. That is, $X$ has finite asymptotic dimension.
\end{proof}
	
\begin{example}\label{exam:normed-linear}
Let $(V, \|\cdot\|)$ be any infinite-dimensional normed linear space. 
Then $T(x) = 2x$ is a uniform expansion of $V$,  with expansion factor $2$,  where the metric is $d(x,y) = \| x-y\|$.
Clearly, $T$ is surjective.   Note that any real $n$-dimensional vector subspace of $V$ has asymptotic dimension $n$ and so 
$V$ has infinite asymptotic dimension. 
It follows from Theorem~\ref{thm:notweak FDC} that $(V,d)$ cannot have weak FDC.
\end{example}

\begin{example}
The condition in Theorem~\ref{thm:notweak FDC} that the uniform expansion $T$ is surjective cannot be omitted.
Consider $X = \bigoplus^\infty_{i=1} \Z$ with the proper metric 
$d_X\big( (x_i), (y_i)\big) = \sum^\infty_{i=1}  i\cdot| x_i - y_i|$.   
Observe that $T\big( (x_i)\big) = (2 x_i)$ is a uniform expansion of $(X,d_X)$, with expansion factor $2$,
but $T$ is not surjective.
Although $(X,d_X)$ has infinite asymptotic dimension, it has FDC
(see \cite[Example 2.5]{Guentner_Tessera_Yu1})
 and hence weak FDC.
 
Now consider $Y=\bigoplus^\infty_{i=1} \R$ equipped with the metric $d_Y\big( (x_i), (y_i)\big) = \sum^\infty_{i=1}  i\cdot |x_i - y_i|$. Then $X$ is a metric subspace of $Y$, and for each $n\in \bN$, the subspace $\bigoplus^n_{i=1} \Z$ of $X$ is coarsely equivalent to the subspace $\bigoplus^n_{i=1} \R$ of $Y$. Nevertheless, $X$ is {\it not} coarsely equivalent to $Y$, since $X$ does not have weak FDC by Example~\ref{exam:normed-linear}. 
\end{example}

If $\cX$ is a metric family and  $N = \sup\{ \asdim(X) ~|~ X \in \cX\}$, 
then clearly $N \leq \asdim(\cX)$.  
Equality often does {\it not} hold. For example,
consider the space
$Z = \bigoplus^\infty_{i=1} \R$ with the Euclidean metric and the metric family $\cX= \{ B_r(0) ~|~r = 1, 2, \ldots \}$ of open balls in $Z$. For each positive integer $n$, let $\cX_n= \{ B_r(0)\cap \R^n ~|~r = 1, 2, \ldots \}$, where $\R^n$ denotes the metric subspace $\bigoplus^n_{i=1} \R$ in $Z$.  Then, by Example~\ref{exam:balls}, $\asdim(\cX_n)=n$. Therefore, $n=\asdim(\cX_n)\leq \asdim(\cX)$ for every positive integer $n$, and so $\asdim(\cX) = \infty$, whereas $\asdim(B_r(0)) = 0$ for each $r$.

However, as was pointed out to us by Daniel Kasprowski, for every countable discrete group $G$ equipped with a proper left-invariant metric, the family of finite subgroups of $G$ does have asymptotic dimension zero as a metric family. The following proposition is a generalization of this fact.

\begin{proposition}
	Let $G$ be a countable discrete group equipped with a proper left-invariant metric $d$. Let $\cF$ be a non-empty collection of subgroups of $G$ that is closed under taking subgroups.  If $\asdim(H)\leq k$ for every $H$ in $\cF$, where $H$ is considered as a metric subspace of $G$, then $\asdim(\cF)\leq k$.
\end{proposition}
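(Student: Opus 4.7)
The plan is to exploit properness of the metric $d$ on $G$ to reduce, at each scale $r > 0$, the decomposition problem for the (possibly infinite) family $\cF$ to a uniform mesh bound coming from only finitely many auxiliary subgroups, and then to use left-invariance to spread that bound from a distinguished subgroup of $H$ to all of $H$. Concretely, for each $r > 0$ and each $H \in \cF$, introduce
\[ K_H := \big\langle H \cap \overline{B}_r(e) \big\rangle \leq H. \]
Since $d$ is proper, $G \cap \overline{B}_r(e)$ is a finite set, so as $H$ varies over $\cF$ there are only finitely many possible generating sets, and hence only finitely many possible subgroups $K_H$. Because $\cF$ is closed under taking subgroups, each $K_H$ lies in $\cF$, hence $\asdim(K_H) \leq k$ by hypothesis. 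Selecting, for each of the finitely many distinct $K_H$, a witnessing $(r,k)$-decomposition and taking the maximum of the resulting mesh values produces a single constant $D = D(r)$ depending only on $r$ (and on $G$ and $\cF$).

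Next, I would transfer the selected decomposition of $K_H$ to all of $H$ via the left-coset partition $H = \bigsqcup_{hK_H \in H/K_H} hK_H$. The key geometric observation is that distinct cosets of $K_H$ in $H$ are more than $r$ apart: if $d(h_1 k_1, h_2 k_2) \leq r$ with $k_1, k_2 \in K_H$, then by left-invariance $k_1^{-1} h_1^{-1} h_2 k_2 \in H \cap \overline{B}_r(e) \subseteq K_H$, forcing $h_1 K_H = h_2 K_H$. Left-translating the chosen $(r,k)$-decomposition $K_H = K_0 \cup \cdots \cup K_k$ by a coset representative yields an isometric decomposition of each coset $hK_H$ with the same mesh bound $D$; assembling $H_i := \bigsqcup_{hK_H} hK_i$ across all cosets, pieces of the same color class from different cosets remain $r$-disjoint because the cosets themselves are $r$-disjoint, and pieces from the same coset are $r$-disjoint by the choice in $K_H$. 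This produces an $(r,k)$-decomposition of $H$ with mesh at most $D(r)$, uniformly in $H \in \cF$, which is precisely $\asdim(\cF) \leq k$.

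The main obstacle that this argument is designed to sidestep is uniformity of the mesh across $\cF$: the hypothesis $\asdim(H) \leq k$ only supplies, for each individual $H$ and each $r$, \emph{some} decomposition whose mesh could a priori depend on $H$. The properness--pigeonhole step, which recognizes only finitely many distinct intersections $H \cap \overline{B}_r(e)$ as $H$ ranges over $\cF$, is exactly what converts the pointwise bounds into a uniform one, and closure of $\cF$ under subgroups is exactly what places the auxiliary groups $K_H$ back inside $\cF$ so that the hypothesis can be applied to them. The coset step is then essentially forced by left-invariance, and verifying that the assembled pieces satisfy the $r$-disjointness and mesh conditions is routine bookkeeping.
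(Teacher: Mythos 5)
Your proof is correct and follows essentially the same route as the paper's: the subgroup generated by $H \cap B_r(e)$, the $r$-disjointness of its cosets via left-invariance, and the pigeonhole use of properness (plus closure under subgroups) to get a uniform mesh bound are exactly the paper's argument. The only difference is in the final packaging: the paper concludes by composing the $(r,0)$-decomposition over the coset family (which is coarsely equivalent to the finite family of the $S_H$'s) with a decomposition of that finite family, invoking its remark on composing decompositions, whereas you assemble the resulting $(r,k)$-decomposition of each $H$ by hand via left translation.
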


\begin{proof}
	Let $r>0$ be given. For each $H$ in $\cF$, let $S_H$ be the subgroup of $H$ generated by $H \cap B_r(e)$, where $e$ is the identity element of $G$. Let $\cU_H$ be the set of left cosets of $S_H$ in $H$. Then, for every $x,y\in H$, 
\[ d(x,y)\leq r  \;\; \Leftrightarrow  \;\;  x^{-1}y\in B_r(e)  \;\;   \Leftrightarrow   \;\;  x^{-1}y\in S_H. \]
Thus, $\cU_H$ is an $r$-disjoint, $0$-dimensional cover of $H$. Let $\cY$ be the metric family $\bigcup_{H\in \cF} \cU_H$. Then, $\cF$ is $(r,0)$-decomposable over $\cY$. Since $\cU_H$ is coarsely equivalent to $\{ S_H \}$, it follows that $\cY$ is coarsely equivalent to $\{ S_H \; | \; H\in \cF\}$, which is a finite set since $d$ is a proper metric. Therefore, $\asdim(\cY)=\asdim\big( \{ S_H \; | \; H\in \cF\} \big) \leq k$. Thus, $\cF$ is $0$-decomposable over $\fA_k$, the collection of all metric families that have asymptotic dimension at most $k$. It follows from Remark~\ref{rem:finite-asdim} that $\asdim(\cF)\leq k$. 
\end{proof}

\section{Equivalent Definitions of Decomposability}	
	In this section we provide three alternative definitions for a metric family $\cX$ to be $n$-decomposable over a collection of metric families $\fC$. We show that they are all equivalent to Definition~\ref{def:n-decompose}, provided $\fC$ is closed under coarse embeddings.  When $\fC=\fB$ (the collection of all bounded metric families) and $\cX$ consists of a single metric space, each of our definitions reduces to one of the standard definitions for finite asymptotic dimension.

	Recall that the multiplicity of a covering $\cU$ of a metric space $X$ is the largest integer $m$ such that every point of $X$ is contained in at most $m$ elements of $\cU$. Given $d>0$, the {\it $d$-multiplicity} of $\cU$ is the largest integer $m$ such that every open $d$-ball, $B_d(x)$,  in $X$ is contained in at most $m$ elements of $\cU$. The {\it Lebesgue number} of $\cU$, $L(\cU)$, is at least $\lambda>0$ if every $B_\lambda(x)$ in $X$ is contained in some element of $\cU$. A {\it uniform simplicial complex} $K$ is a simplicial complex equipped with the $\ell^1$-metric. That is, every element $x\in K$ can be uniquely written as $x=\sum_{v\in K^{(0)}}x_v\cdot v$, where $K^{(0)}$ is the vertex set of $K$, each $x_v\in [0,1]$, $x_v=0$ for all but finitely many $v\in K^{(0)}$, and $\sum_{v\in K^{(0)}}x_v=1$. Then the $\ell^1$-metric is defined by $d^1(x,y)=\sum_{v\in K^{(0)}}| x_v-y_v |$. The {\it open star} of a vertex $v\in K^{(0)}$ is the set $\st(v)=\{ x\in K\;|\; x_v\neq 0 \}$. If there exists an integer $m$ such that for every $x\in K$ the set $\{v\in K^{(0)}\;| \;x_v\neq 0\}$ has cardinality at most $m$, then the {\it dimension of $K$}, $\dim(K)$, is at most $m$. If no such $m$ exists, then $\dim(K)=\infty$.
	
	In what follows, let $\cX=\{X_\alpha \, | \, \alpha \in I\}$ be a metric family, where $I$ is a countable indexing set, and let $\fC$ be a collection of metric families. Let $n$ be a non-negative integer.

\begin{conditionA}\label{A}
	For every $d>0$, there exists a cover $\cV_\alpha$ of $X_\alpha$, for each $\alpha \in I$, such that:
	\begin{enumerate}	
	 	\item[(i)] the $d$-multiplicity of $\cV_\alpha$ is at most $n+1$ for every $\alpha \in I$; and
		\item[(ii)] $\bigcup_{\alpha\in I}\cV_\alpha$ is a metric family in $\fC$.
	\end{enumerate}
\end{conditionA}

\begin{conditionB}\label{B}
	For every $\lambda>0$, there exists a cover $\cU_\alpha$ of $X_\alpha$, for each $\alpha \in I$, such that:
	\begin{enumerate}	
	 	\item[(i)] the multiplicity of $\cU_\alpha$ is at most $n+1$ for every $\alpha \in I$;
		\item[(ii)] the Lebesgue number $L(\cU_\alpha)\geq \lambda$ for every $\alpha \in I$; and
		\item[(iii)] $\bigcup_{\alpha\in I}\cU_\alpha$ is a metric family in $\fC$.
	\end{enumerate}
\end{conditionB}

\begin{conditionC}\label{C}
	For every $\varepsilon>0$, there exists a uniform simplicial complex $K_\alpha$ and an $\varepsilon$-Lipschitz map $\varphi_\alpha:X_\alpha \to K_\alpha$, for each $\alpha \in I$, such that:
	\begin{enumerate}	
	 	\item[(i)] $\dim(K_\alpha)\leq n$ for every $\alpha \in I$; and
		\item[(ii)] $\bigcup_{\alpha\in I}\big\{ \varphi_\alpha^{-1}\big(\st(v)\big) \; \big| \; v \in K_\alpha^{(0)} \big\}$ is a metric family in~$\fC$.
	\end{enumerate}
\end{conditionC}

\begin{proposition}\label{prop:equiv}
	Let $\cX$ be a metric family and $\fC$ be a collection of metric families that is closed under coarse embeddings. Then Conditions~\hyperref[A]{(A)} and~\hyperref[B]{(B)} are each equivalent to Definition~\ref{def:n-decompose}.
\end{proposition}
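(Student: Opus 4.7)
The plan is to prove equivalence by establishing the cycle
\[
\text{Definition~\ref{def:n-decompose}} \Longrightarrow \text{(A)} \Longrightarrow \text{(B)} \Longrightarrow \text{Definition~\ref{def:n-decompose}}.
\]

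For Definition $\Rightarrow$ (A), given $d>0$ I would apply $n$-decomposability with $r=3d$ to obtain, for each $\alpha$, a decomposition $X_\alpha = X_0 \cup \cdots \cup X_n$ where $X_i = \bigsqcup_j X_{ij}$ has the $X_{ij}$ pairwise $3d$-disjoint and all $X_{ij}$ lying in some $\cY \in \fC$. Taking $\cV_\alpha=\{X_{ij}\}$, a triangle-inequality argument shows that any open $d$-ball meets at most one piece of each color (otherwise two $3d$-disjoint pieces would be within distance $2d$), so the $d$-multiplicity of $\cV_\alpha$ is at most $n+1$. Since $\bigcup_\alpha \cV_\alpha$ is a subspace of $\cY \in \fC$, Condition (A) holds.

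For (A) $\Rightarrow$ (B), given $\lambda>0$ I would apply Condition (A) with $d=\lambda$ to obtain covers $\cV_\alpha$ with $\lambda$-multiplicity at most $n+1$ and $\bigcup_\alpha \cV_\alpha \in \fC$. Set $\cU_\alpha = \{N_\lambda(V) \mid V \in \cV_\alpha\}$, the collection of open $\lambda$-neighborhoods. Since $x \in N_\lambda(V)$ if and only if $V \cap B_\lambda(x) \neq \emptyset$, the ordinary multiplicity of $\cU_\alpha$ equals the $\lambda$-multiplicity of $\cV_\alpha$, hence is at most $n+1$. For any $x \in X_\alpha$, choose $V \in \cV_\alpha$ containing $x$; then $B_\lambda(x) \subset N_\lambda(V)$, so $L(\cU_\alpha) \geq \lambda$. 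The inclusions $V \hookrightarrow N_\lambda(V)$ assemble into a coarse equivalence between the families $\bigcup_\alpha \cV_\alpha$ and $\bigcup_\alpha \cU_\alpha$, so closure of $\fC$ under coarse embeddings yields $\bigcup_\alpha \cU_\alpha \in \fC$.

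For (B) $\Rightarrow$ Definition~\ref{def:n-decompose}, the key tool is the well-known Ostrand--Bell--Dranishnikov coloring refinement: any cover of a metric space with multiplicity at most $n+1$ and Lebesgue number at least $4(n+1)r$ admits a refinement $\cV_0 \cup \cdots \cup \cV_n$ in which each color class $\cV_i$ is $r$-disjoint. Given $r>0$, I would apply Condition (B) with $\lambda = 4(n+1)r$ to get covers $\cU_\alpha$ of multiplicity $\leq n+1$ and Lebesgue number $\geq \lambda$ with $\bigcup_\alpha \cU_\alpha \in \fC$, then apply the refinement to each $\cU_\alpha$. The resulting family $\cY = \bigcup_\alpha \cV_\alpha$ witnesses the desired $(r,n)$-decomposition of $\cX$, and since each element of $\cY$ is an isometric subspace of some element of $\bigcup_\alpha \cU_\alpha$, closure of $\fC$ under coarse embeddings places $\cY$ in $\fC$.

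The main obstacle is the third step: the coloring refinement must be invoked uniformly across the family. This succeeds because the refinement is a local, combinatorial construction whose parameters depend only on $n+1$ and the ratio $L(\cU)/r$; applied member-by-member it produces a single family-wide refinement sitting coarsely inside the input cover. Closure of $\fC$ under coarse embeddings, used in every step, is exactly the hypothesis needed to propagate $\fC$-membership through the thickening of step two and the refinement of step three.
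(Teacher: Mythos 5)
Your proposal is correct and follows essentially the same route as the paper: the same cycle Definition~$\Rightarrow$~(A)~$\Rightarrow$~(B)~$\Rightarrow$~Definition, the same $2d$- (here $3d$-) disjointness argument for the $d$-multiplicity bound, the same $\lambda$-thickening plus coarse-equivalence argument for (A)~$\Rightarrow$~(B), and for (B)~$\Rightarrow$~Definition the same refinement of a high-multiplicity, large-Lebesgue-number cover into $n+1$ many $r$-disjoint color classes sitting inside the original cover elements. The only difference is that you cite this last refinement as a known lemma (with a harmlessly larger Lebesgue-number constant), whereas the paper carries out the construction explicitly following Grave; since the refinement does produce pieces contained in elements of the input cover, your appeal to closure under coarse embeddings goes through exactly as in the paper.
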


\begin{proof}
	For notational convenience we prove the proposition when $\cX$ consists of a single metric space $X$. The proof for a general metric family is a straightforward generalization of this case. 
	
	Suppose that $X$ is $n$-decomposable over $\fC$. Let $d>0$ be given. Then, there is a metric family $\cY$ in $\fC$ and a decomposition $X=X_0 \cup X_1 \cup \cdots \cup X_n$ such that, for each $i$
	$$X_i=\bigsqcup_{2d\text{-disjoint}}X_{ij}$$  
where each $X_{ij}$ is in $\cY$. Thus, the cover $\cV=\{ X_{ij} \}$ of $X$ is a subspace of $\cY$ and has $d$-multiplicity less than or equal to $n+1$. Since $\fC$ is closed under coarse embeddings, $\cV$ is also in $\fC$ and Condition~\hyperref[A]{(A)} is satisfied. 

	Suppose that Condition~\hyperref[A]{(A)} is satisfied for $n$ with respect to $\fC$ and let $\lambda>0$ be given. There exists a cover $\cV$ of $X$ that is a metric family in $\fC$ and has $\lambda$-multiplicity less than or equal to $n+1$. Let $\cU=\big\{V^\lambda \; \big| \; V\in \cV  \big\}$, where $V^\lambda$ is the set of points in $X$ whose distance from $V$ is at most $\lambda$. Then the Lebesgue number $L(\cU)\geq \lambda$.  Given $x \in X$, the ball of radius $\lambda$ around $x$ intersects at most $n+1$ elements of $\cV$, since the $\lambda$-multiplicity of $\cV$ is at most $n+1$. This implies that at most $n+1$ elements of $\cU$ contain $x$, i.e., the multiplicity of $\cU$ is at most $n+1$. Since $\cU$ is coarsely equivalent to $\cV$ and $\fC$ is closed under coarse embeddings (and hence under coarse equivalences), Condition~\hyperref[B]{(B)} is satisfied.
	
	Suppose that Condition~\hyperref[B]{(B)} is satisfied for $n$ with respect to $\fC$ and let $r>0$ be given. We follow an argument analogous to the one in \cite[Theorem 9]{Grave} to show that Condition~\hyperref[B]{(B)} implies Definition~\ref{def:n-decompose}. There exists a cover $\cU$ of $X$ such that $\cU$ has multiplicity at most $n+1$, $L(\cU)\geq (n+1)r$, and $\cU$ is in $\fC$. Given $d>0$ and $U \subset X$, let ${\rm Int}_d(U)=\big\{x \in X \;\big|\; B_d(x) \subset U \big\}$. Note that if $d_1 \leq d_2$, then ${\rm Int}_{d_2}(U) \subseteq {\rm Int}_{d_1}(U)$. Also note that if $a\in {\rm Int}_{d}(U) \cap {\rm Int}_{d}(V)$, then $a \in {\rm Int}_{d}(U \cap V)$. Now, for each $i\in \{0,\dots,n\}$, define
\renewcommand{\arraystretch}{2}
\[\begin{array}{rcl}
	\cU_i & = & \big\{U_0 \cap U_1 \cap \cdots \cap U_i \; \big|\; U_0,U_1,\dots,U_i \in \cU \text{ are distinct} \big\} \\
	S_i & = & \bigcup_{U \in \cU_i} {\rm Int}_{(n+1-i)r}(U)  \\
	X_i & = & \bigsqcup_{U \in \cU_i} {\rm Int}_{(n+1-i)r}(U) \smallsetminus S_{i+1}.
\end{array}\]
Since $\cU$ has multiplicity at most $n+1$ and has Lebesgue number $L(\cU)\geq (n+1)r$, it follows that $X=X_0 \cup X_1 \cup \cdots \cup X_n$. Furthermore, since each ${\rm Int}_{(n+1-i)r}(U) \smallsetminus S_{i+1}$ is contained in some element of $\cU$ and $\fC$ is closed under coarse embeddings, the metric family
$$\big\{{\rm Int}_{(n+1-i)r}(U) \smallsetminus S_{i+1} \;\big|\; 0\leq i \leq n \text{ and } U\in \cU_i \big\}$$
is in $\fC$. It remains to show that in fact each $X_i$ is an $r$-disjoint union. We do this by contradiction. Given $i$, suppose that ${\rm Int}_{(n+1-i)r}(U) \smallsetminus S_{i+1} \neq {\rm Int}_{(n+1-i)r}(V) \smallsetminus S_{i+1}$, where $U=U_0 \cap U_1 \cap \cdots \cap U_i$ and $V=V_0 \cap V_1 \cap \cdots \cap V_i$, and that there exist $a \in {\rm Int}_{(n+1-i)r}(U) \smallsetminus S_{i+1}$ and $b \in {\rm Int}_{(n+1-i)r}(V) \smallsetminus S_{i+1}$ with $d(a,b)\leq r$. Then $a\in \big({\rm Int}_{(n+1-i)r}(V)\big)^r$ and $b\in \big({\rm Int}_{(n+1-i)r}(U)\big)^r$. Notice that for each natural number $k$, the $r$-neighborhood 
$$\big({\rm Int}_{(k+1)r}(U)\big)^r= \big\{ y\in X \; \big| \; \exists \; x \text{ such that } B_{(k+1)r}(x)\subset U \text{ and } d(y,x)\leq r \big\}$$
is contained in ${\rm Int}_{kr}(U)=\big\{y \in X \;\big|\; B_{kr}(y) \subset U \big\}$. Therefore, $a \in {\rm Int}_{(n-i)r}(V)$ and $b \in {\rm Int}_{(n-i)r}(U)$. Thus, $a,b\in {\rm Int}_{(n-i)r}(U) \cap {\rm Int}_{(n-i)r}(V)\subset {\rm Int}_{(n-i)r}(U\cap V)$. Since ${\rm Int}_{(n+1-i)r}(U) \smallsetminus S_{i+1} \neq {\rm Int}_{(n+1-i)r}(V) \smallsetminus S_{i+1}$, the set $\{U_0,\dots,U_i,V_0,\dots,V_i  \}$ has at least $i+2$ elements, which implies that $a$ and $b$ are both in ${\rm Int}_{(n-i)r}(U\cap V)={\rm Int}_{(n-i)r}(U_0 \cap \cdots \cap U_i \cap V_0 \cap \cdots \cap V_i) \subset S_{i+1}$. But this contradicts the assumption that $a \in {\rm Int}_{(n+1-i)r}(U) \smallsetminus S_{i+1}$ and $b \in {\rm Int}_{(n+1-i)r}(V) \smallsetminus S_{i+1}$. Therefore, $X$ is $n$-decomposable over $\fC$.
\end{proof}

\begin{lemma}\label{lem:simplicial_constant}
	Let $n$ be a non-negative integer. Then for every uniform simplicial complex $K$ with $\dim(K)\leq n$, the cover $\cV_K$ of $K$ consisting of the open stars of vertices in $K$ has Lebesgue number $L(\cV_K)\geq \frac{1}{n+1}$.
\end{lemma}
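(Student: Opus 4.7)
The plan is to use a pigeonhole argument on the barycentric coordinates of a point in $K$, exploiting the dimension bound $\dim(K)\leq n$ to guarantee that some coordinate is bounded below by $\frac{1}{n+1}$, and then showing that the open $\tfrac{1}{n+1}$-ball around the point sits inside the open star of the corresponding vertex.

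Concretely, given any $x\in K$, first I would write $x=\sum_{v\in K^{(0)}}x_v\cdot v$ in its unique barycentric form. Because $\dim(K)\leq n$, the support $\{v\in K^{(0)}\mid x_v\neq 0\}$ contains at most $n+1$ vertices. Since the coordinates $x_v$ are non-negative and sum to $1$, averaging (pigeonhole) gives at least one vertex $v_0$ with $x_{v_0}\geq \frac{1}{n+1}$.

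Next, I would establish the inclusion $B_{1/(n+1)}(x)\subset \st(v_0)$. For any $y\in K$ with $d^1(x,y)<\frac{1}{n+1}$, the definition of the $\ell^1$-metric gives
\[
|x_{v_0}-y_{v_0}|\leq \sum_{v\in K^{(0)}}|x_v-y_v|=d^1(x,y)<\frac{1}{n+1},
\]
so
\[
y_{v_0}>x_{v_0}-\frac{1}{n+1}\geq 0,
\]
which means $y\in \st(v_0)$. Since $x\in K$ was arbitrary, the open star covering $\cV_K$ has Lebesgue number at least $\frac{1}{n+1}$.

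This argument is essentially a direct computation once the pigeonhole step is in place, so I do not anticipate a serious obstacle; the only subtle point is to use the \emph{open} ball convention (strict inequality $d^1(x,y)<\tfrac{1}{n+1}$) so that the resulting bound $y_{v_0}>0$ is strict and therefore witnesses membership in the open star.
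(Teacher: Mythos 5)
Your proof is correct and follows essentially the same route as the paper: pigeonhole on the barycentric coordinates to find a vertex $v_0$ with $x_{v_0}\geq \tfrac{1}{n+1}$, then the $\ell^1$-estimate showing the $\tfrac{1}{n+1}$-ball about $x$ lies in $\st(v_0)$ (the paper phrases this step as a contrapositive, which is the same computation). No issues.
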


\begin{proof}
	Let $K$ be a uniform simplicial complex of dimension $n$, and let $x \in K$ be given. Then $x=\sum_{v \in K^{(0)}}x_v\cdot v$, where there are at most $n+1$ vertices $v$ with $x_v\neq 0$ and $\sum_{v \in K^{(0)}}x_v=1$. There is a $v \in K^{(0)}$ with $x_v\geq \frac{1}{n+1}$. If $y\in K$ is not in the open star of $v$, $\st(v)$, then $y_v=0$ and $d^1(x,y) \geq \frac{1}{n+1}$. Therefore, the open ball of radius $\frac{1}{n+1}$ centered at $x$ is completely contained in $\st(v)$. Thus, the cover $\cV_K$ of $K$ consisting of the open stars of vertices in $K$ has Lebesgue number $L(\cV_K)\geq \frac{1}{n+1}$.
\end{proof}

\begin{proposition}\label{prop:epsilon_decomp}
	Let $\cX$ be a metric family and $\fC$ be a collection of metric families that is closed under coarse embeddings. Then Condition~\hyperref[C]{(C)} is equivalent to Definition~\ref{def:n-decompose}.
\end{proposition}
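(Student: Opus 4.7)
The plan is to relate Condition (C) to Condition (B), which Proposition \ref{prop:equiv} already identifies with Definition \ref{def:n-decompose}. For notational convenience, we treat the case where $\cX$ consists of a single space $X$; the general case follows by indexing everything by $\alpha$. For \textbf{Condition (C) implies Condition (B)}, given $\lambda > 0$, apply Condition (C) with $\varepsilon = 1/((n+1)\lambda)$ to obtain an $\varepsilon$-Lipschitz map $\varphi \colon X \to K$ with $\dim K \leq n$, such that $\cU := \{\varphi^{-1}(\st(v)) : v \in K^{(0)}\}$ is a cover of $X$ belonging to $\fC$. Since a point of $K$ lies in at most $n+1$ open stars (the vertices of its carrying simplex), the multiplicity of $\cU$ is at most $n+1$. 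The $\varepsilon$-Lipschitz condition sends $B_\lambda(x)$ into a ball of radius $1/(n+1)$ in $K$, which by Lemma \ref{lem:simplicial_constant} is contained in some open star $\st(v)$, so $B_\lambda(x) \subseteq \varphi^{-1}(\st(v))$, giving $L(\cU) \geq \lambda$. This verifies Condition (B).

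For \textbf{Condition (B) implies Condition (C)}, given $\varepsilon > 0$, set $\lambda = 4(n+1)/\varepsilon$ and apply Condition (B) to obtain a cover $\cU = \{U_j\}$ of $X$ of multiplicity $\leq n+1$ and Lebesgue number $\geq \lambda$, with $\cU \in \fC$. Let $K$ be the nerve of $\cU$ equipped with the $\ell^1$-metric, so $\dim K \leq n$ by the multiplicity bound. Define the partition-of-unity map $\varphi \colon X \to K$ by $\varphi(x) = \sum_j \mu_j(x)\, v_j$, where $\mu_j(x) = d(x, X \setminus U_j)/S(x)$ and $S(x) = \sum_k d(x, X \setminus U_k)$. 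The Lebesgue condition forces $S(x) \geq \lambda$ for every $x$. Since $\mu_j(x) > 0$ implies $x \in U_j$, we have $\varphi^{-1}(\st(v_j)) \subseteq U_j$, so the preimage family is a subspace of $\cU$ and hence lies in $\fC$ by closure of $\fC$ under coarse embeddings.

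The main obstacle is verifying that $\varphi$ is $\varepsilon$-Lipschitz. This is a standard quotient-rule calculation: one writes $\mu_j(x) - \mu_j(y)$ as a sum of numerator- and denominator-difference contributions, uses that each distance function $d(\cdot, X \setminus U_j)$ is $1$-Lipschitz and that at most $n+1$ of them are nonzero at any point (so at most $2(n+1)$ indices contribute to either $|S(x) - S(y)|$ or to $\sum_j |\mu_j(x) - \mu_j(y)|$), and combines these with $S(x), S(y) \geq \lambda$. The resulting Lipschitz constant is of order $(n+1)/\lambda$, and the choice $\lambda = 4(n+1)/\varepsilon$ makes $\varphi$ exactly $\varepsilon$-Lipschitz, completing the argument.
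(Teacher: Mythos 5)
Your proposal is correct and follows essentially the same route as the paper: reduce to Condition (B) via Proposition~\ref{prop:equiv}, use Lemma~\ref{lem:simplicial_constant} plus the Lipschitz bound for (C)$\Rightarrow$(B), and the nerve with the partition-of-unity map $\mu_j(x)=d(x,X\setminus U_j)/S(x)$ for (B)$\Rightarrow$(C). Your Lipschitz constant $4(n+1)/\lambda$ (from summing the numerator differences collectively) is in fact slightly sharper than the paper's $(2n+2)(2n+3)/\lambda$, but the argument is otherwise identical.
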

 
\begin{proof}
	For notational convenience we prove the proposition when $\cX$ consists of a single metric space $X$. The proof for a general metric family is a straightforward generalization of this case. 
	
	By Proposition~\ref{prop:equiv}, it suffices to prove that Condition~\hyperref[C]{(C)} is equivalent to Condition~\hyperref[B]{(B)}. We follow~\cite[Assertion 2]{Bell_Dranish1} to show that Condition~\hyperref[C]{(C)} implies Condition~\hyperref[B]{(B)}, and we follow~\cite[Theorem 1]{Bell_Dranish2} to show that Condition~\hyperref[B]{(B)} implies Condition~\hyperref[C]{(C)}.
	
	Assume that $X$ satisfies Condition~\hyperref[C]{(C)} for $n$ with respect to $\fC$. Let $r>0$ be given. Then, by Lemma~\ref{lem:simplicial_constant}, there is a uniform simplicial complex $K$ of dimension $n$ and a $\frac{1}{(n+1)r}$-Lipschitz map $\varphi: X \to K$. Since $\dim(K)=n$, the cover $\cU=\big\{ \varphi^{-1}\big(\st(v)\big) \; \big| \; v\in K^{(0)}\big\}$ of $X$  has multiplicity at most $n+1$ and Lebesgue number $L(\cU)>r$. By assumption, the metric family $\big\{ \varphi^{-1}\big(\st(v)\big) \; \big| \; v\in K^{(0)} \big\}$ is in $\fC$. Thus, $X$ satisfies Condition~\hyperref[B]{(B)}.

	Now assume that $X$ satisfies Condition~\hyperref[B]{(B)} for $n$ with respect to $\fC$. Let $\varepsilon>0$ be given. Then there is a cover $\cU$ of $X$ that is a metric family in $\fC$, has multiplicity at most $n+1$ and has Lebesgue number $L(\cU)\geq \lambda=\frac{(2n+2)(2n+3)}{\varepsilon}$. Note that, because $L(\cU)>0$ and $\fC$ is closed under coarse embeddings, we may additionally assume, without loss of generality, that $\cU$ is an open covering of $X$. For each $U\in \cU$, define $\varphi_U: X \to [0,1]$ by
	\[ \varphi_U(x)=\dfrac{d(x,U^c)}{\sum_{V \in \;\cU}d(x,V^c)} \]
where $U^c$ is the complement of $U$ in $X$. Let $K={\rm Nerve}(\cU)$ equipped with the uniform metric. Since the multiplicity of $\cU$ is at most $n+1$, $\dim(K) \leq n$. Define the map $\varphi:X \to K$ by
	\[ \varphi(x)=\sum_{U \in \; \cU}\varphi_U(x) \cdot [U] \]
where $[U]$ denotes the vertex of $K$ defined by $U$. Note that given a vertex $[V]$ in $K$, $\varphi^{-1}(\st([V]))\subset V$, since $\varphi(x)$ is in the open star of $[V]$ if and only if $\varphi_V(x) \neq 0$, and this implies that $x$ is in $V$. Therefore, the metric family $\big\{ \varphi^{-1}(\st([V])) \; \big| \; [V] \in K^{(0)} \big\}\subset \cU$ is in $\fC$ since $\fC$ is closed under coarse embeddings.

	It remains to show that $\varphi$ is $\varepsilon$-Lipschitz. Since $L(\cU)\geq \lambda$, it follows that $\sum_{V \in \;\cU}d(x,V^c)\geq \lambda$. Also note that for every $x,y \in X$ and $U \in \cU$, the triangle inequality implies
	\[ \big| d(x,U^c) - d(y,U^c)\big| \leq d(x,y). \]
Thus, {\small
\renewcommand{\arraystretch}{3}
\[\begin{array}{rcl}
	\big|\varphi_U(x)-\varphi_U(y)\big| & = & \left| \dfrac{d(x,U^c)}{\sum_{V \in \;\cU}d(x,V^c)} - \dfrac{d(y,U^c)}{\sum_{V \in \;\cU}d(y,V^c)}  \right| \\
	& \leq & \dfrac{|d(x,U^c) - d(y,U^c)|}{\sum_{V \in \;\cU}d(x,V^c)} + \left| \dfrac{d(y,U^c)}{\sum_{V \in \;\cU}d(x,V^c)} - \dfrac{d(y,U^c)}{\sum_{V \in \;\cU}d(y,V^c)} \right| 
\end{array} \] }

\noindent
which is less than or equal to
{\small
\renewcommand{\arraystretch}{2}
	\[ \dfrac{d(x,y)}{\sum_{V \in \;\cU}d(x,V^c)} +  \dfrac{d(y,U^c)}{\Big(\sum_{V \in \;\cU}d(x,V^c)\Big) \Big(\sum_{V \in \;\cU}d(y,V^c)\Big)}\cdot \sum_{V \in \;\cU} \big|d(x,V^c)-d(y,V^c)\big|,\]}

\noindent
which is less than or equal to
{\small 
\renewcommand{\arraystretch}{2}
\[\begin{array}{rcl}
	\frac{1}{\lambda}\, d(x,y) + \frac{1}{\lambda} \Big(\sum_{V \in \;\cU} \big|d(x,V^c)-d(y,V^c)\big|\Big) & \leq & \frac{1}{\lambda}\, d(x,y) + \frac{1}{\lambda}\, 2(n+1)\, d(x,y)  \\
	& = & \frac{1}{\lambda}\, (2n+3)\,  d(x,y).
\end{array}\]
}
\noindent
Therefore,
{\small 
$$ d^1(\varphi(x),\varphi(y)) \, = \sum_{U \in \;\cU} \big|\varphi_U(x)-\varphi_U(y)\big| \,\leq\, 2(n+1)\, \left(\frac{1}{\lambda}\, (2n+3)\, d(x,y)\right) \,=\, \varepsilon \, d(x,y).
$$
}
\noindent
This completes the proof.
\end{proof}

%

The equivalent definitions for decomposability give us more tools to work with. For instance, consider the collection of metric families that are {\it coarsely embeddable into Hilbert space}, defined below. The notion of a metric family that is coarsely embeddable into Hilbert space was introduced by Dadarlat and Guentner in~\cite{Dadarlat_Guentner1}, although they called it a ``family of metric spaces that is equi-uniformly embeddable."  

\begin{definition}\label{def:coarse-embedding-Hilbert}
	A metric family $\cX=\{X_\alpha \, | \, \alpha \in I\}$ is {\em coarsely embeddable into Hilbert space} if there is a family of Hilbert spaces $\cH=\{ H_\alpha  \, | \, \alpha \in I\}$ and a map of metric families $F=\{F_\alpha:X_\alpha \to H_\alpha \, | \, \alpha \in I\}$ such that $F:\cX \to \cH$ is a coarse embedding. The collection of all metric families that are coarsely embeddable into Hilbert space is denoted by $\fH$.
\end{definition}

	In~\cite{Dadarlat_Guentner2}, Dadarlat and Guentner proved the following.

\begin{proposition}\label{prop:DG}\cite[Proposition 2.3]{Dadarlat_Guentner2}
	 A metric family $\cX=\{X_\alpha \, | \, \alpha \in I\}$ is in $\fH$ if and only if for every $R>0$ and $\varepsilon>0$ there exists a family of Hilbert spaces $\cH=\{ H_\alpha  \, | \, \alpha \in I\}$ and a map of metric families $\xi=\{\xi_\alpha:X_\alpha \to H_\alpha \, | \, \alpha \in I\}$ such that
	 \begin{enumerate}
	 	\item[(i)] $\| \xi_\alpha(x) \|=1$, for all $x\in X_\alpha$ and $\alpha\in I$;
		\item[(ii)] $\forall \alpha\in I$, $\forall x,x'\in X_\alpha$, $d_\alpha(x,x')\leq R \; \Rightarrow \; \| \xi_\alpha(x)-\xi_\alpha(x') \|\leq \varepsilon$;
		\item[(iii)] $\lim_{S \to \infty}\sup_{\alpha \in I}\sup \left\{ \big|\langle \xi_\alpha(x), \xi_\alpha(x') \rangle \big| : d_\alpha(x,x')\geq S, \; x,x'\in X_\alpha \right\}=0$. \qed
	 \end{enumerate}
\end{proposition}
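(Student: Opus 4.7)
My plan is to prove both implications by passing through the Schoenberg--GNS correspondence between maps into Hilbert space and positive (resp.\ conditionally negative) definite kernels on the source. This converts statements about vectors into statements about the scalar kernels $\langle \xi_\alpha(x), \xi_\alpha(y)\rangle$ and $\|F_\alpha(x)-F_\alpha(y)\|^2$, where the uniformity over $\alpha\in I$ demanded by the family setting is easier to track.

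For the direction $(\Rightarrow)$, I would start from a coarse embedding $F=\{F_\alpha\colon X_\alpha\to H_\alpha\}$ with uniform control functions $\delta\le\rho$. Since $(x,y)\mapsto \|F_\alpha(x)-F_\alpha(y)\|^2$ is conditionally negative definite on each $X_\alpha$, Schoenberg's theorem gives that $(x,y)\mapsto e^{-t\|F_\alpha(x)-F_\alpha(y)\|^2}$ is positive definite for every $t>0$, and a standard GNS-type construction produces unit vectors $\xi_\alpha^{(t)}(x)$ in some Hilbert space $H_\alpha^{(t)}$ whose inner products realize this kernel. Condition (i) is automatic, (iii) follows immediately from $\delta(S)\to\infty$, and (ii) is obtained by choosing $t$ small enough, using the identity $\|\xi^{(t)}(x)-\xi^{(t)}(y)\|^2=2\bigl(1-e^{-t\|F(x)-F(y)\|^2}\bigr)$ together with the uniform bound $\|F(x)-F(y)\|\le\rho(R)$ when $d(x,y)\le R$.

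For the direction $(\Leftarrow)$, I would diagonalize across scales. Choose $R_n\to\infty$ and $\varepsilon_n\to 0$ sufficiently fast that $\sum_n\varepsilon_n^2/n<\infty$ (for concreteness, $R_n=n$ and $\varepsilon_n=n^{-2}$), apply the hypothesis to each $(R_n,\varepsilon_n)$ to obtain $\xi_n=\{\xi_n^\alpha\}$, and form the conditionally negative definite kernels $\psi_n(x,y)=\tfrac12\|\xi_n^\alpha(x)-\xi_n^\alpha(y)\|^2=1-\operatorname{Re}\langle\xi_n^\alpha(x),\xi_n^\alpha(y)\rangle$. Aggregate into
\[
  \psi_\alpha(x,y)\;=\;\sum_{n=1}^\infty\tfrac{1}{n}\,\psi_n(x,y).
\]
The rapid decay of $\varepsilon_n$ gives pointwise convergence and a proper (logarithmic) upper bound on $\psi_\alpha$ in $d_\alpha(x,y)$. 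For the lower bound, condition (iii) applied to the first $M$ families yields, uniformly in $\alpha$, a threshold $S_M$ beyond which $\psi_n^\alpha(x,y)\ge\tfrac12$ for every $n\le M$; hence $\psi_\alpha(x,y)\ge\tfrac12\sum_{n\le M}1/n$ once $d_\alpha(x,y)\ge S_M$, and the divergence of the harmonic sum yields a lower bound growing to infinity with $d$. A GNS construction for c.n.d.\ kernels then realizes $\psi_\alpha$ as $\|F_\alpha(x)-F_\alpha(y)\|^2$ for a map $F_\alpha\colon X_\alpha\to H_\alpha$ into a Hilbert space, providing the desired coarse embedding of $\cX$.

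The main obstacle is calibrating the parameters $(R_n,\varepsilon_n)$ and the weights $1/n$ in the reverse direction so that the three competing requirements --- summability of the series, a proper upper bound, and a lower bound diverging with $d$ --- hold simultaneously and uniformly over $\alpha\in I$. The uniform supremum built into condition (iii) is essential here, as it guarantees a single threshold $S_n$ that works across the entire family, ensuring that the lower-bound estimates on finite truncations of $\psi_\alpha$ transfer without loss to every $\alpha$ and thus yield a genuine coarse embedding of metric families.
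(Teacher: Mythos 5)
The paper does not prove this proposition; it is quoted verbatim from Dadarlat--Guentner \cite[Proposition 2.3]{Dadarlat_Guentner2} and stamped with a \verb|\qed|. Your argument is correct and is essentially the standard proof from that reference: the forward direction via Schoenberg's theorem applied to the conditionally negative definite kernel $\|F_\alpha(x)-F_\alpha(y)\|^2$ (with $t$ chosen small to secure condition (ii) while (iii) follows from $\delta(S)\to\infty$ for any fixed $t>0$), and the reverse direction by summing the kernels $1-\operatorname{Re}\langle\xi_n^\alpha(x),\xi_n^\alpha(y)\rangle$ with weights calibrated against $(R_n,\varepsilon_n)$, where the uniform supremum over $\alpha$ in (iii) is exactly what makes the lower control function work for the whole family.
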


	 Proposition~\ref{prop:epsilon_decomp} enables us to make use of Dadarlat and Guentner's work to prove the following theorem.

\begin{theorem}\label{thm:decomp-coarse-embedding}
	The collection $\fH$ of metric families that are coarsely embeddable into Hilbert space is stable under weak decomposition. That is, if a metric family $\cX$ is $n$-decomposable over $\fH$, then $\cX$ is in $\fH$.
\end{theorem}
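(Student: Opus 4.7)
The plan is to verify the Dadarlat--Guentner criterion of Proposition~\ref{prop:DG} directly for $\cX$, by gluing together unit-vector-valued maps on the pieces of the decomposition. Given $R>0$ and $\varepsilon>0$, I would first invoke Condition~\hyperref[C]{(C)} (Proposition~\ref{prop:epsilon_decomp}) with a small parameter $\varepsilon'>0$ to be chosen at the end, obtaining uniform simplicial complexes $K_\alpha$ with $\dim K_\alpha\leq n$ and $\varepsilon'$-Lipschitz maps $\varphi_\alpha \colon X_\alpha \to K_\alpha$ such that the open-star-preimage family $\cU = \bigcup_\alpha\{\varphi_\alpha^{-1}(\st(v)) : v\in K_\alpha^{(0)}\}$ belongs to $\fH$. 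Applying Proposition~\ref{prop:DG} to $\cU$ with parameters $R$ and a small $\varepsilon_1>0$ then produces Hilbert spaces $H_{\alpha,v}$ and unit-vector maps $\eta_{\alpha,v}\colon \varphi_\alpha^{-1}(\st(v)) \to H_{\alpha,v}$ satisfying all three DG conditions for $\cU$.

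I would then set $H_\alpha = \bigoplus_{v \in K_\alpha^{(0)}} H_{\alpha,v}$ and, writing $\varphi_\alpha(x) = \sum_v x_v \cdot v$ for the barycentric coordinates, define
\[
\xi_\alpha(x) \;=\; \sum_{v \in K_\alpha^{(0)}} \sqrt{x_v}\, \eta_{\alpha,v}(x),
\]
with the convention that the $v$-summand is $0$ when $x_v=0$ (which is consistent, since $\eta_{\alpha,v}$ is only defined on $\varphi_\alpha^{-1}(\st(v))$). Condition~(i) of Proposition~\ref{prop:DG} is immediate: $\|\xi_\alpha(x)\|^2 = \sum_v x_v \|\eta_{\alpha,v}(x)\|^2 = \sum_v x_v = 1$. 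For condition~(iii), Cauchy--Schwarz on the weights yields $\sum_v \sqrt{x_v x'_v} \leq 1$, whence
\[
\bigl|\langle \xi_\alpha(x), \xi_\alpha(x')\rangle\bigr| \;\leq\; \sum_v \sqrt{x_v x'_v}\, \bigl|\langle \eta_{\alpha,v}(x), \eta_{\alpha,v}(x')\rangle\bigr| \;\leq\; \sup_{v \,:\, x_v x'_v > 0} \bigl|\langle \eta_{\alpha,v}(x), \eta_{\alpha,v}(x')\rangle\bigr|,
\]
which tends to $0$ uniformly in $\alpha$ as $d_\alpha(x,x')\to\infty$, by property~(iii) for the $\eta_{\alpha,v}$'s.

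The main work is condition~(ii). Using the identity $\|\sqrt{a}\, u - \sqrt{b}\, w\|^2 = (\sqrt{a}-\sqrt{b})^2 + \sqrt{ab}\,\|u-w\|^2$ for unit vectors $u,w$ (with the boundary cases $a=0$ or $b=0$ handled directly), together with $(\sqrt{a}-\sqrt{b})^2 \leq |a-b|$, I would obtain
\[
\|\xi_\alpha(x)-\xi_\alpha(x')\|^2 \;\leq\; \sum_v |x_v - x'_v| \;+\; \sum_v \sqrt{x_v x'_v}\, \|\eta_{\alpha,v}(x)-\eta_{\alpha,v}(x')\|^2.
\]
The first sum equals $d^1\bigl(\varphi_\alpha(x),\varphi_\alpha(x')\bigr)\leq \varepsilon'\, d_\alpha(x,x') \leq \varepsilon' R$, controlled by the Lipschitz bound on $\varphi_\alpha$; the second sum is at most $\varepsilon_1^2$ whenever $d_\alpha(x,x')\leq R$, since both $x,x'$ then lie in $\varphi_\alpha^{-1}(\st(v))$ for each vertex $v$ with nonzero weight, and Cauchy--Schwarz again bounds $\sum_v \sqrt{x_v x'_v}$ by $1$. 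Choosing, for instance, $\varepsilon' = \varepsilon^2/(2R)$ and $\varepsilon_1 = \varepsilon/\sqrt{2}$ then delivers $\|\xi_\alpha(x)-\xi_\alpha(x')\|\leq \varepsilon$. The chief technical obstacle I anticipate is the coordination of the parameters $\varepsilon'$ and $\varepsilon_1$ with the prescribed $R$ and $\varepsilon$, together with the careful treatment of ``boundary'' vertices where exactly one of $x_v, x'_v$ vanishes; the $\sqrt{\cdot}$ weighting is precisely what makes those contributions collapse cleanly into the coordinate-variation sum.
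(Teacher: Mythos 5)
Your proposal is correct and follows essentially the same route as the paper: both extract a partition of unity subordinate to a cover lying in $\fH$ via Condition (C) and Proposition~\ref{prop:epsilon_decomp}, weight the Dadarlat--Guentner unit-vector maps on the pieces by square roots of the partition functions, and verify the criterion of Proposition~\ref{prop:DG}. The only differences are cosmetic: the paper first thickens the pieces to their $R$-neighborhoods and then cites the computation in \cite[Proof of Theorem 3.2]{Dadarlat_Guentner2}, whereas you carry out that estimate explicitly via the orthogonality identity, which also renders the thickening unnecessary because the boundary terms are absorbed into the $\ell^1$-variation of the barycentric coordinates.
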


\begin{proof}
	In light of Proposition~\ref{prop:epsilon_decomp}, all of the ingredients for the proof of this theorem are contained in~\cite{Dadarlat_Guentner2}. The argument is organized as follows.
	 
	Let $\cX=\{X_\alpha \, | \, \alpha \in I\}$ be a metric family that is $n$-decomposable over $\fH$. We will use Proposition~\ref{prop:DG} to prove that $\cX$ is in $\fH$. Let $R>0$ and $\varepsilon>0$ be given. Since $\cX$ is $n$-decomposable over $\fH$, the proof of Proposition~\ref{prop:epsilon_decomp} implies that for each $\alpha\in I$ there is a cover $\cU_\alpha=\{ U_{\alpha,j} \}_{j \in J_\alpha}$ of $X_\alpha$, for some indexing set $J_\alpha$, and a family of maps $\varphi_\alpha=\{ \varphi_{\alpha,j}:X_\alpha \to [0,1] \; | \; j\in J_\alpha \}$ such that
	\begin{enumerate}
		\item[(a)] $\sum_{j\in J_\alpha} \varphi_{\alpha,j}(x)=1$, for all $x\in X_\alpha$;
		\item[(b)] $\varphi_{\alpha,j}(x)=0$ if $x \notin U_{\alpha,j}$;
		\item[(c)] $\forall x,y \in X_\alpha$, $d_\alpha(x,y)\leq R\; \Rightarrow \; \sum_{j\in J} \big|\varphi_{\alpha,j}(x)-\varphi_{\alpha,j}(y)\big| \leq \frac{\varepsilon^2}{4}$ ;
		\item[(d)] the metric family $\{U_{\alpha,j} \, | \, \alpha\in I, j\in J_\alpha \}$ is in $\fH$.
	\end{enumerate}

The metric family $\{U_{\alpha,j}^R~|~\alpha\in I, j\in J_\alpha \}$, where $U_{\alpha,j}^R=\{ x\in X_\alpha~|~d_\alpha(x,U_{\alpha,j})\leq R \}$, is coarsely equivalent to the metric family $\{U_{\alpha,j} \; | \; \alpha\in I, j\in J_\alpha \}$. Therefore, since $\{U_{\alpha,j} \; | \; \alpha\in I, j\in J_\alpha \}$ is in $\fH$, so is $\{U_{\alpha,j}^R \; | \; \alpha\in I, j\in J_\alpha \}$. By Proposition~\ref{prop:DG}, there exists a family of Hilbert spaces $\cH=\{ H_{\alpha,j}  \, | \, \alpha \in I, j\in J_\alpha\}$ and a map of metric families $\xi=\{\xi_{\alpha,j}:U_{\alpha,j}^R \to H_{\alpha,j} \, | \, \alpha \in I, j\in J_\alpha\}$ satisfying
	 \begin{enumerate}
	 	\item[(i)] $\| \xi_{\alpha,j}(x) \|=1$, for all $x\in U_{\alpha,j}^R$;
		\item[(ii)] $\sup \big\{ \| \xi_{\alpha,j}(x)-\xi_{\alpha,j}(y)\| : d_\alpha(x,y)\leq R, x,y\in U_{\alpha,j}^R\big\} \leq \varepsilon/2$, for all $\alpha\in I$, $j\in J_\alpha$;
		\item[(iii)] $\lim_{S \to \infty}\sup_{\alpha \in I, j\in J_\alpha}\sup \left\{ \big|\langle \xi_{\alpha,j}(x), \xi_{\alpha,j}(y) \rangle \big| : d_\alpha(x,y)\geq S, \; x,y\in U_{\alpha,j}^R \right\}=0$.
	 \end{enumerate}

	For each $\alpha\in I$, extend $\xi_{\alpha,j}$ to all of $X_\alpha$ by setting $\xi_{\alpha,j}(x)=0$ if $x\in X_\alpha \smallsetminus U_{\alpha,j}^R$, and define $\eta_\alpha:X_\alpha \to H_\alpha=\oplus_{j\in J_\alpha}H_{\alpha,j}$, $\eta_\alpha(x)=\big(\eta_{\alpha,j}(x)\big)_{j\in J_\alpha}$, by 
	$$\eta_{\alpha,j}(x)=\varphi_{\alpha,j}(x)^{1/2} \xi_{\alpha,j}(x).$$ 
It now follows from~\cite[Proof of Theorem 3.2]{Dadarlat_Guentner2} that
	 \begin{enumerate}
	 	\item[(i$'$)] $\| \eta_\alpha(x) \|=1$, for all $x\in X_\alpha$ and $\alpha\in I$;
		\item[(ii$'$)] $\forall \alpha\in I$, $\forall x,y\in X_\alpha$, $d_\alpha(x,y)\leq R \; \Rightarrow \; \| \eta_\alpha(x)-\eta_\alpha(y) \|\leq \varepsilon$;
		\item[(iii$'$)] $\lim_{S \to \infty}\sup_{\alpha \in I}\sup \left\{ \big|\langle \eta_\alpha(x), \eta_\alpha(y) \rangle \big| : d_\alpha(x,y)\geq S, \; x,y\in X_\alpha \right\}=0$.
	 \end{enumerate}
Thus, by Proposition~\ref{prop:DG}, $\cX=\{X_\alpha \, | \, \alpha \in I\}$ is in $\fH$.
\end{proof}

\begin{remark}
	In~\cite[Theorem 4.6]{Guentner_Tessera_Yu2}, Guentner, Tessera and Yu proved that the collection of {\it exact}\footnote{{\it Exactness} of a metric space is a coarse invariant related to the notion of {\it Property~A}. Specifically, a metric space with Property~A is exact, and an exact metric space with bounded geometry has Property~A~\cite{Dadarlat_Guentner1}.} metric families, $\fE$, is closed under weak decomposition. Thus, since $\fB$ is contained in $\fE$, every metric family with weak finite decomposition complexity is also in~$\fE$. A straightforward generalization of~\cite[Proposition 2.10(c)]{Dadarlat_Guentner2} shows that an exact metric family is coarsely embeddable into Hilbert space. Therefore, we have the following sequence of inclusions of collections of metric families, each of which is stable under decomposition:
	\[ \wD \subset \fE \subset \fH. \] 
\end{remark}


\section{Weak Hyperbolic Dimension}

In this section we prove that a metric space with finite {\it hyperbolic dimension}, and more generally one with finite {\it weak hyperbolic dimension},
has weak finite decomposition complexity (Theorem~\ref{thm:weak_hyp_weak FDC}).
Buyalo and Schroeder introduced the hyperbolic dimension of a metric space (Definition \ref{def:hyperbolicdim}) to study the quasi-isometric embedding properties
of negatively curved spaces
(see  \cite{Buyalo_Schroeder} for an exposition).
The related notion of weak hyperbolic dimension was introduced by Cappadocia in
his Ph.D.~thesis, \cite{Cappadocia}.

\begin{definition}\label{def:doubling}
Let $N$ be a positive integer and $R>0$.
A subset $Y \subset X$ of a metric space $(X,d)$ is {\it $(N,R)$-large scale doubling}
if for every $x \in X$ and every $r\geq R$, the intersection of $Y$ with a ball in $X$ with radius $2r$ centered at $x$ can
be covered with $N$ balls of radius $r$ with centers in $X$.

A metric family $\cY$ of subsets of $X$ is {\it large scale doubling}\footnote{Some authors call such a collection of subsets {\it uniformly large scale doubling}.} if there exists $(N,R)$ such that
each $Y \in \cY$ is $(N,R)$-large scale doubling and
every finite union of elements of $\cY$ is $(N,R')$-large scale doubling,
where possibly $R' > R$ and $R'$ could depend on the particular finite union.
\end{definition}

Hyperbolic dimension is analogous to asymptotic dimension with the role of bounded metric families replaced
by large scale doubling metric families.

\begin{definition}\label{def:hyperbolicdim}
Let $n$ be a non-negative integer.
Let $\fL$ be the collection of large scale doubling metric families.
A metric space $(X,d)$  has {\it hyperbolic dimension}  at most $n$, denoted $\hyperdim(X)\leq n$, if $\{X\}$ is $n$-decomposable over $\fL$.
We say $\hyperdim(X) = n$ if $n$ is the smallest non-negative integer for which $\hyperdim(X)\leq n$.  
If no such integer exists then, by convention,  $\hyperdim(X) = \infty$.
\end{definition}

Since a bounded metric family is large scale doubling, we have that\linebreak 
 $\hyperdim(X) \leq \asdim(X)$.
If $X$ is a large scale doubling metric space (for example, $\R^n$ with the Euclidean metric), then  $\hyperdim(X) =0$.
Buyalo and Schroeder  showed $\hyperdim(\bbH^n) =n$, 
where  $\bbH^n$ is  $n$-dimensional hyperbolic space, $n \geq 2$.
Chris Cappadocia introduced the 
{\it weak hyperbolic dimension} of a metric space in his Ph.D.~thesis, \cite{Cappadocia}.
In Cappadocia's theory, large scale doubling metric families are replaced by {\it weakly large scale doubling}\footnote{Cappadocia uses the terminology {\it uniformly weakly large scale doubling}.}
metric families, dropping the condition on finite unions appearing in Definition~\ref{def:hyperbolicdim}. That is, a metric family $\cY$ of subsets of  a metric space $X$ is called {\it weakly large scale doubling} if there exists $(N,R)$ such that
each $Y \in \cY$ is $(N,R)$-large scale doubling.

\begin{definition}\label{def:weaklydoubling} (\cite{Cappadocia})
Let $w\fL$ be the collection of weakly large scale doubling metric families.
A metric space $(X,d)$  has {\it weak hyperbolic dimension}  at most $n$, denoted $\whyperdim(X)\leq n$, if $\{X\}$ is $n$-decomposable over $w\fL$.
We say $\whyperdim(X) = n$ if $n$ is the smallest non-negative integer for which $\whyperdim(X)\leq n$.  
If no such integer exists then, by convention,  $\whyperdim(X) = \infty$.
\end{definition}

Since $\fL \subset w\fL$, we have
$\whyperdim(X) \leq \hyperdim(X) \leq \asdim(X)$.

We say that a metric space is {\it $(N,R)$-large scale doubling} if it is $(N,R)$-large scale doubling as a subset of itself (see Definition \ref{def:doubling}).

\begin{lemma}
\label{lem:subsetdoubling}
Let $U \subset X$  be an $(N,R)$-large scale doubling subset of a metric space $(X, d_X)$.
Then $(U, d_U)$ is $(N^2,2R)$-large scale doubling where $d_U$ is the subspace metric induced by $d_X$.
\end{lemma}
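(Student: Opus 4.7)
The plan is to apply the $(N,R)$-large scale doubling property of $U \subset X$ twice: once at scale $r$ to cover $B^U_{2r}(u)$ by $N$ balls in $X$ of radius $r$, and then once more at the halved scale $r/2$ (which still meets the threshold because of the new bound $r \geq 2R$) to shrink each of those $N$ pieces to $N$ sub-pieces of radius $r/2$ in $X$. The halving is what lets me re-center each sub-piece at a point actually lying in $U$ while only growing the radius back up to $r$, thereby producing $N \cdot N = N^2$ balls of $d_U$-radius $r$ with centers in $U$ that cover $B^U_{2r}(u)$.

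More concretely, fix $u \in U$ and $r \geq 2R$. Since the subspace metric agrees with $d_X$ on $U$, one has $B^U_{2r}(u) = U \cap B^X_{2r}(u)$. Applying the $(N,R)$-doubling hypothesis with the parameter $r \geq R$ gives points $x_1,\dots,x_N \in X$ with
\[
U \cap B^X_{2r}(u) \;\subset\; \bigcup_{i=1}^N B^X_r(x_i).
\]
Applying the hypothesis again to each $U \cap B^X_{2(r/2)}(x_i)$ with parameter $r/2 \geq R$ yields points $z_{i,1},\dots,z_{i,N} \in X$ with
\[
U \cap B^X_r(x_i) \;\subset\; \bigcup_{j=1}^N B^X_{r/2}(z_{i,j}).
\]

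For each pair $(i,j)$ with $U \cap B^X_{r/2}(z_{i,j}) \neq \emptyset$, choose a point $w_{i,j} \in U \cap B^X_{r/2}(z_{i,j})$; for the (at most $N^2$) remaining pairs, discard them. The triangle inequality shows that for any $v \in U \cap B^X_{r/2}(z_{i,j})$,
\[
d_U(v, w_{i,j}) \;=\; d_X(v,w_{i,j}) \;\leq\; d_X(v,z_{i,j}) + d_X(z_{i,j}, w_{i,j}) \;<\; \tfrac{r}{2} + \tfrac{r}{2} \;=\; r,
\]
so $U \cap B^X_{r/2}(z_{i,j}) \subset B^U_r(w_{i,j})$. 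Combining the two covers,
\[
B^U_{2r}(u) \;\subset\; \bigcup_{i,j} B^U_r(w_{i,j}),
\]
a union of at most $N^2$ balls of $d_U$-radius $r$ centered in $U$, which is exactly the $(N^2, 2R)$-large scale doubling condition for $(U, d_U)$.

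I do not anticipate any serious obstacle here; the only subtlety worth flagging is why the threshold must be raised from $R$ to $2R$: the second application of the doubling property needs the parameter $r/2$ to satisfy $r/2 \geq R$, which forces $r \geq 2R$, and the re-centering step is precisely what forces two applications (and hence the $N^2$ rather than $N$).
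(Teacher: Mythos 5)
Your proof is correct and follows essentially the same route as the paper's: two applications of the doubling hypothesis (the second at scale $r/2$, which is why the threshold becomes $2R$) to get $N^2$ balls of radius $r/2$, followed by re-centering each nonempty piece at a point of $U$ at the cost of doubling the radius back to $r$. The paper merely compresses the two applications into a single sentence producing $N^2$ balls of radius $r/2$ at once, but the argument is the same.
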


\begin{proof}
Let $x \in U$ and $r \geq 2R$.
Since $U$ is an $(N,R)$-large scale doubling  subset of $X$, there are points $x_1, \ldots, x_{N^2} \in X$
such that $B_{2r}(x) \subset \bigcup^{N^2}_{i=1} B_{r/2}(x_i)$.
Let $J$ be the set of indices, $i$,  for which $B_{r/2}(x_i) \cap U$ is non-empty.
For each $i \in J$ choose $u_i \in B_{r/2}(x_i) \cap U$.
Since $B_{r/2}(x_i) \subset B_r(u_i)$ for $i \in J$,
we have  that 
$B_{2r}(x) \cap U \subset \bigcup_{i \in J} B_r(u_i) \cap U$.
\end{proof}

A subset $A \subset X$ of a metric space $(X, d_X)$ is said to be
{\it $L$-separated},  where $L > 0$, 
if $d_X(u,v) \geq L$ for all $u, v \in A$ with $u \neq v$.
We say that a metric space $(X, d_X)$ is {\it $N$-doubling}, where $N$ is a positive integer,
if it is $(N,R)$-large scale doubling for all $R>0$,  that is, doubling at all scales with doubling constant $N$.

%
%
%
%
%

We are now able to prove the key fact needed to establish Theorem~\ref{thm:weak_hyp_weak FDC}.

\begin{proposition}\label{prop:lsddecomposition}  
Let $\cX=\{ (X_\alpha, d_\alpha)  ~|~ \alpha \in I\}$ be a metric family such that there exists $(N,R)$ with the property that each $(X_\alpha, d_\alpha)$ is  $(N,R)$-large scale doubling. Then there exists a positive integer $M$, depending only on $N$, such that $\asdim(\cX) \leq M$.
\end{proposition}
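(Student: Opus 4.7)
The plan is to verify Definition~\ref{def:family-asdim} directly, producing for each scale $r > 0$ a decomposition of each $X_\alpha$ into $M+1$ colour classes of pairwise $r$-disjoint pieces drawn from a uniformly bounded family, where $M$ depends only on $N$. The strategy is to work with maximal $r$-separated nets and a greedy colouring of an associated proximity graph; the large scale doubling hypothesis enters only to bound the local density of the net, which in turn bounds the graph's maximum degree.

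The first step is to iterate the $(N,R)$-large scale doubling property. For any $r \geq 4R$ and any $x \in X_\alpha$, four successive applications of the doubling condition (halving the ball radius at each stage) cover $B_{4r}(x)$ by $N^4$ balls of radius $r/4$ with centres in $X_\alpha$; since each such sub-ball has diameter strictly less than $r$, it contains at most one point of any $r$-separated subset of $X_\alpha$. Consequently, if $A_\alpha \subset X_\alpha$ is a maximal $r$-separated net, then $|A_\alpha \cap B_{4r}(a)| \leq N^4$ for every $a \in A_\alpha$. I would then form the proximity graph $G_\alpha$ with vertex set $A_\alpha$ and an edge between pairs at distance $\leq 4r$; by the density bound, its maximum degree is at most $N^4 - 1$, so a greedy colouring yields a partition $A_\alpha = A_{\alpha,0} \sqcup \cdots \sqcup A_{\alpha,M}$ with $M + 1 = N^4$ colours such that any two distinct same-coloured points are more than $4r$ apart.

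To assemble the decomposition, set $X_{\alpha,i} = \bigcup_{a \in A_{\alpha,i}} B_r(a)$. Maximality of $A_\alpha$ gives $X_\alpha = X_{\alpha,0} \cup \cdots \cup X_{\alpha,M}$, while the $>4r$ separation of same-coloured centres forces the balls within each $X_{\alpha,i}$ to be pairwise $r$-disjoint (indeed more than $2r$ apart) by the triangle inequality. Each ball has diameter at most $2r$, so the metric family $\cY_r = \{B_r(a) \mid \alpha \in I,\ a \in A_\alpha\}$ is uniformly bounded, exhibiting $(r, M)$-decomposability of $\cX$ over $\cY_r \in \fB$. For scales $r < 4R$ one simply uses the decomposition produced at the threshold $r = 4R$, which is a fortiori $r$-disjoint, and whose mesh remains a fixed constant depending only on $R$.

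The main obstacle I expect is the first step: one must track the iteration of the doubling condition carefully enough to guarantee that the eventual multiplicity bound $M = N^4 - 1$ depends only on $N$, with the constant $R$ affecting only the scale threshold $4R$ below which we fall back on the coarser decomposition. Once this combinatorial bookkeeping is in place, the greedy colouring and the assembly of the $r$-disjoint pieces are routine.
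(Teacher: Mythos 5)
Your proof is correct, and its combinatorial core coincides with the paper's: both arguments take a maximal separated net in each $X_\alpha$ and iterate the doubling condition four times to show that a ball of radius comparable to the separation scale meets the net in at most $N^4$ points, arriving at the same constant $M=N^4-1$. Where you genuinely diverge is in how that density bound becomes a decomposition. The paper covers $X_\alpha$ by the balls $B_{4r}(x)$, $x\in Z_\alpha$, checks that this cover has multiplicity at most $N^4$ and Lebesgue number at least $\lambda$, and then invokes the equivalence of Condition~(B) with Definition~\ref{def:n-decompose} (Proposition~\ref{prop:equiv}, whose proof runs through the Grave-style argument with the sets ${\rm Int}_{(n+1-i)r}(U)$); it also routes the counting through Lemma~\ref{lem:subsetdoubling} applied to the net itself. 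You instead colour the net greedily via a bounded-degree proximity graph and assemble the $r$-disjoint colour classes by hand, which makes the proof self-contained---no appeal to Proposition~\ref{prop:equiv} or Lemma~\ref{lem:subsetdoubling} is needed---at the modest cost of a transfinite greedy colouring (well-order the net; each vertex has at most $N^4-1$ neighbours, so $N^4$ colours suffice). The only point to tidy is the mismatch between your edge relation ``distance $\leq 4r$'' and the density bound, which controls the \emph{open} ball $B_{4r}(a)$: either define edges by strict inequality (same-coloured centres are then at distance $\geq 4r$, which still yields $d\big(B_r(a),B_r(b)\big)\geq 2r>r$), or run one more doubling step to control the closed ball. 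Your fallback to the $4R$-scale decomposition for $r<4R$, and the observation that the bounded family $\cY_r$ is allowed to depend on $r$, are both fine.
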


\begin{proof}
Let $\lambda > 0$ be given.  Let $r = \max(\lambda, R)$.
For each $\alpha \in I$, choose a maximal $2r$-separated set $Z_\alpha \subset X_\alpha$.
Then $\cU_\alpha = \{ B_{4r}(x) ~|~ x \in Z_\alpha \}$ is a cover of $X_\alpha$.
Note that the Lebesgue number of $\cU_\alpha$ satisfies $L(\cU_\alpha) \geq \lambda$, for each $\alpha \in I$.

Let $\ell$ be a positive integer and assume that $y \in B_{4r}(x_1) \cap \cdots \cap B_{4r}(x_\ell)$,
where $x_1, \ldots, x_\ell \in Z_\alpha$ are distinct.
Note that $\{x_1, \ldots, x_\ell\} \subset B_{8r}(x_1)$.
By Lemma \ref{lem:subsetdoubling}, $B_{8r}(x_1) \cap Z_\alpha$ can be covered by $N^2$ balls of radius $4r$ with centers in $Z_\alpha$ and,
in turn, each of these balls can be covered by $N^2$ balls of radius $2r$ with centers in $Z_\alpha$.
For each $z \in Z_\alpha$, $B_{2r}(z) \cap  Z_\alpha = \{z\}$ because $Z_\alpha$ is $2r$-separated.
It follows that $B_{8r}(x_1) \cap  Z_\alpha$ contains at most $N^4$ points, and so $\ell \leq N^4$.
Hence, for each $\alpha \in I$, the multiplicity of the cover $\cU_\alpha$ is at most $N^4$. Since $\cup_{\alpha\in I}\cU_\alpha$ is a bounded metric family, Proposition~\ref{prop:equiv} implies that $\cX$ is $(N^4-1)$-decomposable over $\fB$ (the collection of bounded metric families). In other words, $\asdim(\cX)\leq N^4-1$.
\end{proof}

Combining Definition \ref{def:weaklydoubling} and  Proposition~\ref{prop:lsddecomposition} yields the following theorem.

\begin{theorem}\label{thm:weak_hyp_weak FDC}
A  metric space $X$ with finite weak hyperbolic dimension has weak finite decomposition complexity.
If $X$ has weak hyperbolic dimension at most 1, then $X$ has (strong) finite decomposition complexity.
\end{theorem}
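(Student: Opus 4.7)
The plan is that Proposition~\ref{prop:lsddecomposition} does essentially all of the work, and the theorem follows by a short formal argument chasing definitions. The key observation to extract is the inclusion $w\fL \subset \fA$: a weakly large scale doubling family $\cY$ satisfies, by Definition~\ref{def:weaklydoubling}, a uniform $(N,R)$ large scale doubling condition on each of its members, and Proposition~\ref{prop:lsddecomposition} then produces an integer $M$ (in fact $M \leq N^4 - 1$) bounding the asymptotic dimension of the entire family. This is exactly what it means for $\cY$ to lie in $\fA$.

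With this inclusion established, suppose $\whyperdim(X) \leq n$. By Definition~\ref{def:weaklydoubling}, $\{X\}$ is $n$-decomposable over $w\fL$, and since $w\fL \subset \fA$, $\{X\}$ is $n$-decomposable over $\fA$. The inclusion chain \eqref{eq:asdim_has_FDC} gives $\fA \subset \wD$, so $\{X\}$ is $n$-decomposable over $\wD$. But $\wD$ is, by its very definition, closed under weak decomposition, so $\{X\} \in \wD$; that is, $X$ has weak finite decomposition complexity.

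For the second, stronger statement, assume $\whyperdim(X) \leq 1$. Then $\{X\}$ is $1$-decomposable, i.e.\ strongly decomposable, over $w\fL$, and hence over $\fA$. Again by \eqref{eq:asdim_has_FDC} we have $\fA \subset \fD$, so $\{X\}$ is strongly decomposable over $\fD$. Since $\fD$ is by definition closed under strong decomposition, $\{X\} \in \fD$, and $X$ has (strong) FDC.

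I do not anticipate any real obstacle: the substantive counting argument, which uses a maximal $2r$-separated set and the doubling property to bound the multiplicity of a natural cover by $N^4$, has already been carried out in Proposition~\ref{prop:lsddecomposition}, and what remains is pure bookkeeping with the closure properties built into the definitions of $\fD$ and $\wD$. The only place where one has to be slightly careful is to note that the strong form of the theorem requires the $n=1$ hypothesis precisely because $\fD$ is only guaranteed to absorb $1$-decompositions, whereas the absorbing property of $\wD$ applies to any $n$-decomposition.
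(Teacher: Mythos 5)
Your proposal is correct and follows essentially the same route as the paper: the paper's proof likewise applies Proposition~\ref{prop:lsddecomposition} to conclude that $\{X\}$ is $n$-decomposable over $\fA$ and then invokes the inclusions \eqref{eq:asdim_has_FDC} together with the closure properties defining $\fD$ and $\wD$. You have merely made the final bookkeeping explicit.
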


\begin{proof}
If $\whyperdim(X)\leq n$, then, by Proposition~\ref{prop:lsddecomposition}, $X$ is $n$-decomposable over $\fA$ (the collection of metric families with finite asymptotic dimension). Therefore by \eqref{eq:asdim_has_FDC}, $X$ has weak FDC, and if $n\leq 1$, then $X$ has FDC.
\end{proof}

Since $\whyperdim(X) \leq \hyperdim(X)$, we also get the following corollary.

\begin{corollary}\label{cor:whyperdimdecompose}
A  metric space $X$ with finite hyperbolic dimension has weak finite decomposition complexity.
If $X$ has hyperbolic dimension at most 1, then $X$ has (strong) finite decomposition complexity. \qed
\end{corollary}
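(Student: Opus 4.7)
The plan is to obtain this corollary as an immediate consequence of Theorem~\ref{thm:weak_hyp_weak FDC} together with the inequality $\whyperdim(X) \leq \hyperdim(X)$ noted just above Definition~\ref{def:weaklydoubling}. So the only content of the argument is to record that an $n$-decomposition witnessing finite hyperbolic dimension already witnesses finite weak hyperbolic dimension with the same $n$, and then invoke the theorem.

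First I would justify $\whyperdim(X) \leq \hyperdim(X)$ by comparing the defining collections. By Definition~\ref{def:doubling} and the discussion preceding Definition~\ref{def:weaklydoubling}, every large scale doubling metric family is weakly large scale doubling (the notion of weakly large scale doubling simply omits the hypothesis on finite unions). Hence $\fL \subset w\fL$. Therefore any $(r,n)$-decomposition of the singleton family $\{X\}$ over a family $\cY \in \fL$ is in particular an $(r,n)$-decomposition over $\cY \in w\fL$. Consequently, if $\{X\}$ is $n$-decomposable over $\fL$ in the sense of Definition~\ref{def:n-decompose}, then it is $n$-decomposable over $w\fL$, giving $\whyperdim(X) \leq \hyperdim(X)$.

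Now suppose $\hyperdim(X) < \infty$. Then $\whyperdim(X) \leq \hyperdim(X) < \infty$, and Theorem~\ref{thm:weak_hyp_weak FDC} applies to conclude that $X$ has weak FDC. For the sharper statement, if $\hyperdim(X) \leq 1$, then $\whyperdim(X) \leq 1$, and the second assertion of Theorem~\ref{thm:weak_hyp_weak FDC} yields (strong) FDC.

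There is no real obstacle here; the corollary is essentially a relabeling once the inequality $\whyperdim \leq \hyperdim$ is in place and Theorem~\ref{thm:weak_hyp_weak FDC} has been proved. The only thing worth checking carefully is that the definitions of $n$-decomposability coincide in the two settings, which is immediate from $\fL \subset w\fL$ and Definition~\ref{def:n-decompose}.
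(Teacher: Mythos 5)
Your argument is correct and is exactly the paper's: the corollary is deduced from Theorem~\ref{thm:weak_hyp_weak FDC} via the inequality $\whyperdim(X) \leq \hyperdim(X)$, which follows from $\fL \subset w\fL$. Nothing further is needed.
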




\section{Some open questions}

In this section we discuss some open problems involving decomposition complexity.

There are some interesting finitely generated groups for which the FDC (or weak FDC) condition is unknown.
 \smallskip
 
\begin{question}
\label{opengroupquestions}
Consider the following groups.
\begin{enumerate}
\item Grigorchuk's group  of intermediate growth,
\item Thompson's group $F = \langle A, B ~|~ [AB^{-1}, A^{-1}BA] = [AB^{-1}, A^{-2}BA^2] = 1 \rangle$,
\item $\operatorname{Out}(F_n)$, the outer automorphism group of a free group  $F_n$ of rank $n \geq 3$.
\end{enumerate}
For which of these groups, if any,  does the FDC (or weak FDC) condition hold?
\end{question}

Grigorchuk's group and Thompson's group $F$ are known to have infinite asymptotic dimension.
Grigorchuk's group is amenable and therefore has Yu's {\it  Property~A},  
a condition that implies coarse embeddability into Hilbert space
(see \cite[\S 4]{Guentner_Tessera_Yu2}
for a discussion of Property~A).
A group with weak FDC has Property~A, but the reverse implication is unknown.

\begin{question}
Does Property~A for a countable group imply weak FDC?
\end{question}

Osajda gave an example of a finitely generated group that is coarsely embeddable into Hilbert space yet does not have Property~A,  \cite{Osajda}.
Thus, Osajda's example is a group in the collection $\fH$ of metric families that are coarsely embeddable into Hilbert space, but not in the collection $\fE$ of exact metric families.

\begin{question}
Are there interesting collections of metric families, stable under (weak or strong) decomposition, that lie strictly in between $\fE$ and $\fH$?
\end{question}

As pointed out to us by the referee, while our proof of Theorem~\ref{thm:decomp-coarse-embedding} is very specific to Hilbert spaces, it is natural to try to generalize it to more general classes of Banach spaces. That is:

\begin{question}
Is there an interesting class of Banach spaces that is stable under weak decomposition? \end{question}

The mapping class group of a surface has finite asymptotic dimension, \cite{BBF},
and, by analogy,
one surmises that $\operatorname{Out}(F_n)$ may also have  finite asymptotic dimension and hence FDC.
Although a proof that the asymptotic dimension of $\operatorname{Out}(F_n)$ is finite has so far been elusive,
perhaps the less restrictive,
yet geometrically consequential (see the discussion in \S\ref{sec:DC}), weak FDC condition might be easier to demonstrate.

\begin{question}
\label{opengroupquestionstwo}
Which, if any, of the groups:  Grigorchuk's group,  Thompson's group $F$ and  $\operatorname{Out}(F_n)$,  $n \geq 3$, have finite weak hyperbolic dimension?
\end{question}
None of these groups are large scale doubling as metric spaces and so their weak hyperbolic dimension is
at least $1$.   Note that by Theorem \ref{thm:weak_hyp_weak FDC}, any group on this list that has finite weak hyperbolic dimension must have weak FDC.

\begin{question}
Does a space with finite weak hyperbolic dimension have FDC?
\end{question}
This question may be more tractable than the general question of whether weak FDC implies FDC
(see Question \ref{doesweakimplystrong}).



\def\cprime{$'$}

\end{document}